\documentclass[10pt]{amsart}

\usepackage{amscd,amsmath,amssymb,amsfonts,verbatim}
\usepackage{graphicx}
\usepackage[active]{srcltx}
\usepackage[cmtip, all]{xy}
\usepackage[pdftex,colorlinks=true,citecolor=black,linkcolor=black,urlcolor=blue]{hyperref}
\usepackage{calc}

\setlength{\textwidth}{5.9in}             
\setlength{\textheight}{9in}
\setlength{\topmargin}{-.20in}
\setlength{\oddsidemargin}{.21in}
\setlength{\evensidemargin}{.21in}


\newtheorem{theorem}{Theorem}[section]
\newtheorem{proposition}[theorem]{Proposition}
\newtheorem{lemma}[theorem]{Lemma}
\newtheorem{corollary}[theorem]{Corollary}

\theoremstyle{definition}

\newtheorem{definition}[theorem]{Definition}

\theoremstyle{remark}
\newtheorem{remark}[theorem]{Remark}
\newtheorem*{ack}{Acknowledgments}
\numberwithin{equation}{section}

\newcommand{\sF}{{\mathcal F}}

\newcommand{\sI}{{\mathcal I}}

\newcommand{\sM}{{\mathcal M}}
\newcommand{\sN}{{\mathcal N}}

\newcommand{\sP}{{\mathcal P}}

\newcommand{\sR}{{\mathcal R}}

\newcommand{\bL}{{\mathbb L}}

\newcommand{\Q}{{\mathbb Q}}

\newcommand{\Z}{{\mathbb Z}}

\newcommand{\mc}{\mathcal}

\newcommand{\codim}{{\rm codim}}

\newcommand{\Hom}{{\rm Hom}}

\newcommand{\Spec}{{\rm Spec \,}}

\newcommand{\id}{{\operatorname{id}}}

\newcommand{\Sm}{{\mathbf{Sm}}}
\newcommand{\SmProj}{{\mathbf{SmProj}}}

\newcommand{\can}{{\operatorname{\rm can}}}

\newcommand{\Ab}{{\mathbf{Ab}}}

\newcommand{\eff}{{\operatorname{\rm eff}}}

\newcommand{\ds}{{/\kern-3pt/}}

\newcommand{\lci}{{\rm l.c.i.\!}}





\newcommand{\disp}{\displaystyle}

\newcommand{\by}[1]{\overset{#1}{\to}}
\newcommand{\longby}[1]{\overset{#1}{\longrightarrow}}
\newcommand{\iso}{\by{\sim}}
\renewcommand{\Phi}{\varPhi}

\newcommand{\n}{\mathrm{num}}
\newcommand{\oq}{\Omega^*_\Q}
\newcommand{\bt}{\mathbf{t}}
\newcommand{\bn}{\mathbf{n}}

\newcommand{\bm}{\mathbf{m}}

\newcommand{\Id}{\operatorname{\mathrm{Id}}}
\newcommand{\Aut}{\operatorname{\mathrm{Aut}}}

\renewcommand{\dim}{\text{\rm dim}}
\newcommand{\tuborg}{\left\{\begin{array}{ll}}
\newcommand{\sluttuborg}{\end{array}\right.}

\begin{document}

\title{Fourier-Mukai transformation on algebraic cobordism}
\author{Anandam BANERJEE and Thomas HUDSON}
\address{Department of Mathematical Sciences, KAIST, 291 Daehak-ro, Yuseong-gu,
Daejeon, 305-701, Republic of Korea (South)}
\email{anandam1729@kaist.ac.kr}
\address{Department of Mathematical Sciences, KAIST, 291 Daehak-ro, Yuseong-gu,
Daejeon, 305-701, Republic of Korea (South)}
\email{hudson.t@kaist.ac.kr}

\keywords{algebraic cobordism, Fourier-Mukai transforms, Beauville decomposition, motivic decomposition}

\begin{abstract}We define a notion of Fourier-Mukai transform on algebraic cobordism cycles with $\Q$-coefficients on an abelian variety. We use this to produce a Beauville decomposition of algebraic cobordism and study its consequences, including a decomposition of the cobordism motive of an abelian variety.
\end{abstract}

\subjclass[2010]{Primary 14F43; Secondary 55N22, 14K05, 14C25}

\maketitle

\section{Introduction}
The Fourier transformation is a well-known operator in analysis that gives an isometry between the $L^2$-spaces of a real vector space and its dual vector space. In \cite{Mukai}, Mukai introduced an analogous notion for sheaves of modules over abelian varieties. 
Let $A$ be an abelian variety and $\hat{A}$ be its dual abelian variety. Using the normalized Poincar\'e bundle on $A\times \hat{A}$, Mukai defined a functor between the derived categories of the sheaves of modules over $A$ and $\hat{A}$, and proved that this is an equivalence of categories. In \cite{Beauville1}, Beauville used his results to define  such a functor on cohomology, K-theory and the Chow ring of $A$ with similar properties in each theory. He studied the Fourier transformation on Chow rings in detail and proved interesting consequences, including a decomposition of the Chow ring of an abelian variety into eigenspaces of the pullback by a multiplication by $n$ morphism, for integers $n$ (see \cite{Beauville2}). Deninger and Murre used Beauville and Mukai's work in \cite{DM} to give a  decomposition of $CH(A\times A)\otimes \Q$ into eigenspaces of $(\id\times n)^*$, which induces a canonical decomposition of Chow motives of an abelian variety. 

The Chow ring being an oriented cohomology theory (see \cite{LM} for the definition), a natural question to ask is whether a functor with the usual properties of a Fourier-Mukai transformation can be defined on any other oriented cohomology theory. Levine and Morel defined the theory of algebraic cobordism in \cite{LM} and showed that it is the universal oriented cohomology theory on $\Sm_k$.  In this paper, we define a Fourier-Mukai operator on the theory $\Omega_\Q$ of algebraic cobordism with $\Q$-coefficients and study its consequences. 

The key idea that helped us extend the definition of the Fourier-Mukai operator to $\Omega_\Q$ is the following theorem:
\begin{theorem}
For any abelian variety $A$  over a field $k$ of characteristic $0$, the canonical morphism of oriented cohomology theories induces an  isomorphism of rings
\[ \psi_A:\Omega^*_\Q(A)\to CH_\Q[\mathbf{t}]^*(A),\]
where $CH_\Q[\mathbf{t}]^*(A)=CH_\Q(A)[t_1,t_2,\ldots]$ is the graded polynomial ring on variables $t_i,i>0$ of degree $-i$. 
\end{theorem}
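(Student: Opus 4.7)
The plan is to realise $CH_\Q[\mathbf{t}]^*$ as an oriented cohomology theory on $\Sm_k$, extract $\psi$ from the universal property of $\Omega^*_\Q$, and then verify that $\psi_A$ is an isomorphism by a graded Nakayama argument exploiting the finiteness of $g := \dim A$.

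For the first step, identify $\Q[t_1,t_2,\ldots]$ with the rationalised Lazard ring $\mathbb{L}_\Q$ via any choice of polynomial generators. Set $CH_\Q[\mathbf{t}]^*(X) := CH^*_\Q(X)\otimes_\Q \mathbb{L}_\Q$ with the $\mathbb{L}_\Q$-linear pullbacks and pushforwards inherited from $CH^*_\Q$, and orient it by $c_1(L) := \exp_F(c_1^{CH}(L))$, where $\exp_F \in \mathbb{L}_\Q[[x]]$ is the power series inverting the logarithm that trivialises the universal formal group law over $\mathbb{L}_\Q$. The axioms of an oriented cohomology theory in the sense of \cite{LM} are then routine to verify. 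Universality of $\Omega^*_\Q$ produces a canonical morphism of oriented cohomology theories $\psi:\Omega^*_\Q\to CH_\Q[\mathbf{t}]^*$, whose component at $A$ is the $\mathbb{L}_\Q$-linear ring map $\psi_A$.

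Next, let $I := \bigoplus_{n<0}\mathbb{L}_\Q^n$ be the augmentation ideal (generated by the $t_i$'s). The composition of $\psi$ with the augmentation $CH_\Q[\mathbf{t}]^*\twoheadrightarrow CH^*_\Q$ is a morphism of oriented cohomology theories $\Omega^*_\Q\to CH^*_\Q$; by uniqueness, it coincides with the canonical Levine-Morel comparison. Combined with the Levine-Morel isomorphism $\Omega^*_\Q(A)\otimes_{\mathbb{L}_\Q}\Q \iso CH^*_\Q(A)$, this shows that $\psi_A\otimes_{\mathbb{L}_\Q}\Q$ is the identity on $CH^*_\Q(A)$.

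Finally, both $\Omega^*_\Q(A)$ and $CH_\Q[\mathbf{t}]^*(A)$ vanish in cohomological degrees $>g$, so the same holds for $\cok(\psi_A)$ and $\ker(\psi_A)$. Since $\mathbb{L}_\Q$ is non-positively graded with $\mathbb{L}_\Q^0=\Q$ and $I$ concentrated in strictly negative degrees, the graded Nakayama lemma applies to any bounded-above graded $\mathbb{L}_\Q$-module: vanishing of the mod-$I$ reduction forces the module itself to vanish. Applied to $\cok(\psi_A)$, which has zero mod-$I$ reduction by the previous paragraph, this yields surjectivity of $\psi_A$. The target $CH^*_\Q(A)\otimes_\Q \mathbb{L}_\Q$ is a free $\mathbb{L}_\Q$-module, so $\Tor_1^{\mathbb{L}_\Q}(CH_\Q[\mathbf{t}]^*(A),\Q)=0$; reducing the short exact sequence $0\to\ker(\psi_A)\to\Omega^*_\Q(A)\to CH_\Q[\mathbf{t}]^*(A)\to 0$ modulo $I$ stays exact and forces $\ker(\psi_A)\otimes_{\mathbb{L}_\Q}\Q=0$, hence $\ker(\psi_A)=0$ by a second application of graded Nakayama. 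The main obstacle I anticipate is Step 1: making precise that $\exp_F\circ c_1^{CH}$ indeed defines a Levine-Morel oriented cohomology theory, in particular verifying the projective bundle formula and the formal group law compatibility. Once that is settled, the rest of the argument is formal linear algebra.
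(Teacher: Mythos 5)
There is a genuine gap, and it sits exactly where you flagged your ``main obstacle'' --- but the problem is more serious than verifying the projective bundle formula. In the Levine--Morel framework an oriented cohomology theory is just a functor with push-forwards; the first Chern class is not independent data but is \emph{defined} by $c_1(L)=s^*s_*1_X$. So you cannot ``keep the $\bL_\Q$-linear pullbacks and pushforwards inherited from $CH^*_\Q$'' and separately decree $c_1(L):=\exp_F(c_1^{CH}(L))$: with unchanged pullbacks and pushforwards the theory $CH^*_\Q\otimes\bL_\Q$ has additive formal group law, its classifying map $\bL_\Q\to\bL_\Q$ factors through $\Q$, and the canonical morphism from $\Omega^*_\Q$ is nowhere near surjective. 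The only consistent way to reorient is the twisting construction $CH_\Q[\bt]^*_{(\bt)}$ of \cite[\S 4.1.8--9, \S 7.4.2]{LM}, which modifies the pullbacks by inverse Todd classes of virtual normal bundles and --- crucially --- modifies the \emph{product}: $x\cdot_\bt y=Td_\bt^{-1}(N_{\delta_X})\cdot x\cdot y$. The theorem you are asked to prove asserts a ring isomorphism onto the \emph{ordinary} polynomial ring $CH_\Q(A)[\bt]$, and your argument never bridges the gap between the twisted product (where the canonical morphism actually lands) and the ordinary one. That bridge is the essential content of the paper's proof and is special to abelian varieties: since $T_A$ and $T_{A\times A}$ are trivial, $N_{\delta_A}=[\delta_A^*T_{A\times A}]-[T_A]$ is stably trivial in $K^0(A)$, hence $Td^{-1}_\bt(N_{\delta_A})=1$ and the two products coincide (the paper's Lemma on twisted products, together with the analogous computation \eqref{eq:pullback} for pullbacks). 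Without this input the statement is simply false for a general smooth projective $X$, so no formal argument can avoid it.

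Your second and third steps (reduction mod the augmentation ideal plus graded Nakayama, using boundedness above in cohomological degree and freeness of the target over $\bL_\Q$) are sound as linear algebra, but note that they amount to re-deriving the isomorphism $\Omega^*_\Q\iso CH_\Q[\bt]^*_{(\bt)}$, which the paper simply quotes from \cite{LM} (Theorem~\ref{theorem:twisted}); they still only give an isomorphism of groups (or of twisted rings), not the claimed ring isomorphism onto $CH_\Q[\bt]^*(A)$.
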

We also show that $\psi$ commutes with push-forward maps and the pullbacks of morphisms between abelian varieties. The Fourier-Mukai operator on $CH_\Q$ induces one on $CH_\Q[\mathbf{t}]^*$ by extension of scalars. Denoting this operator as $\sF_{CH}$, we show that the Fourier-Mukai operator $\sF_\Omega$ on $\Omega_\Q$ that we defined in \S~\ref{sec:definition} satisfies
\[ \psi_{\hat{A}}\circ\sF^\Omega=\sF^{CH}\circ\psi_A. \]
This commutativity helps us obtain most of the properties of $\sF^\Omega$ in Proposition~\ref{prop:properties}.

We also prove an analogue of Beauville decomposition for algebraic cobordism with $\Q$-coefficients, that is, a decomposition of $\Omega_\Q$ into eigenspaces of $\bn^*$ where $\bn$ in the multiplication by $n$ morphism of an abelian variety.
\begin{theorem}
Let $A$ be an abelian variety of dimension $g$ over $k$. Then, we have
\[ \Omega_\Q^{p}(A) = \bigoplus_{s=2p-2g}^{\mathrm{Min}(2p,p)} \Omega^{\phantom{\Q}p}_{\Q s}(A), \]
where \[ \Omega^{\phantom{\Q}p}_{\Q s}(A):=\big\{ x\in \Omega^{p}_\Q(A) \vert\, \bn^*x=n^{2p-s}x \big\}. \]
\end{theorem}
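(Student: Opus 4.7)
The strategy is to transfer Beauville's classical eigenspace decomposition of $CH_\Q(A)$ under $\bn^*$ to $\Omega^*_\Q(A)$ through the isomorphism $\psi_A: \Omega^*_\Q(A) \to CH_\Q[\bt]^*(A)$ provided by the first theorem. Since $\psi_A$ commutes with pullbacks of morphisms between abelian varieties, we have $\psi_A(\bn^* x) = \bn^*(\psi_A(x))$, so it suffices to decompose the degree-$p$ component on the polynomial side into $\bn^*$-eigenspaces and then transport the decomposition back through $\psi_A^{-1}$.

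First I would pin down the $\bn^*$-action on $CH_\Q[\bt]^*(A) = CH_\Q(A)[t_1, t_2, \ldots]$. The variables $t_i$ arise as classes from the coefficient ring $\Omega^*(k)_\Q$ pulled back along the structural morphism $f: A \to \Spec k$; since $f \circ \bn = f$, we obtain $\bn^* t_i = t_i$. Hence $\bn^*$ acts by its usual pullback on $CH_\Q(A)$, extended trivially on the polynomial variables. Writing a general degree-$p$ element as $\alpha \cdot \prod_i t_i^{a_i}$ with $\alpha \in CH^{p+d}_\Q(A)$ and $d = \sum_i i\, a_i \geq 0$, and applying Beauville's decomposition $CH^{p+d}_\Q(A) = \bigoplus_\sigma CH^{p+d,\sigma}_\Q(A)$ on which $\bn^*$ acts as $n^{2(p+d)-\sigma}$, the substitution $s := \sigma - 2d$ rewrites the eigenvalue as $n^{2p-s}$. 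Collecting monomials then gives
\[
CH_\Q[\bt]^p(A) \;=\; \bigoplus_{s}\;\bigoplus_{d \geq 0} CH^{p+d,\, s+2d}_\Q(A) \cdot M_d,
\]
where $M_d$ denotes the $\Q$-span of degree $-d$ monomials in the $t_i$.

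The stated range is a short bookkeeping step: Beauville's bounds $\max(p+d-g,\, 2p+2d-2g) \leq \sigma \leq p+d$ become $\max(p-d-g,\, 2p-2g) \leq s \leq p-d$ after the substitution, whose union over $d \geq 0$ (subject to the vanishing $CH^q_\Q(A) = 0$ for $q > g$) is exactly $2p-2g \leq s \leq p = \min(2p,p)$. The upper endpoint is realized at $d=0$, while the lower endpoint requires $d = g-p$ when $p \leq g$. The main subtlety is establishing $\bn^* t_i = t_i$, which is immediate once the $t_i$ are identified as pullbacks from $\Spec k$; granted that, the theorem becomes a formal consequence of Beauville's decomposition applied summand-wise together with the compatibility of $\psi_A$ with $\bn^*$.
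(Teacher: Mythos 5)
Your proof is correct, but it takes a genuinely different route from the paper's. The paper proves the theorem intrinsically in cobordism by running Beauville's original Fourier argument through $\sF^\Omega$: it decomposes $\sF^\Omega(x)=\sum_{q\le g}y_q$ by codimension, uses Proposition~\ref{prop:properties}(4) and Proposition~\ref{prop:decomp} to place each $\hat{\sF}^\Omega(y_q)$ in $\Omega^{\phantom{\Q}p}_{\Q\, p+q-g}(A)$, recovers $x$ from $\hat{\sF}^\Omega\circ\sF^\Omega=(-1)^g\sigma_A^*$, and then pins down the range of $s$ via the vanishing of $(c_1^{CH}(\sP))^N$ for $N$ outside $[0,2g]$. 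You bypass $\sF^\Omega$ entirely: you transport the problem through $\psi_A$, note that $\bn^*$ acts on $CH_\Q[\bt]^*(A)$ as $\bn^*_{CH}\otimes\id$ (the $t_i$ being pulled back from $\Spec k$ and $f\circ\bn=f$), and apply Beauville's decomposition summand-by-summand in $\bigoplus_{d}CH^{p+d}_\Q(A)\otimes M_d$ with the index shift $s=\sigma-2d$. Both arguments ultimately rest on the same two inputs --- the $\bn^*$-equivariance of $\psi_A$ and Beauville's classical theorem --- but yours is shorter and makes transparent why the range of $s$ widens from Beauville's $[p-g,p]$ to $[2p-2g,\mathrm{Min}(2p,p)]$: the extra eigenvalues come from higher-codimension Chow classes multiplied by negative-degree coefficient variables. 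What your route does not produce is Proposition~\ref{prop:decomp} (the equivalent characterizations of $\Omega^{\phantom{\Q}p}_{\Q s}$), which the paper's argument yields along the way and uses afterwards. One small bookkeeping point: your identification of the upper endpoint as ``$p=\min(2p,p)$'' silently assumes $p\ge 0$; for $p<0$ (which does occur in cobordism) the summand $d=0$ is absent since $CH^{p}_\Q(A)=0$, the maximum of $p-d$ over the admissible $d\ge -p$ is $2p$, and your own bounds then give the endpoint $\min(2p,p)=2p$ as required --- so this is a slip in the statement of the endpoint, not a gap in the argument.
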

As an application of the properties of the Fourier-Mukai transformation, we generalize a result of Bloch \cite{Bloch} to the case of cobordism cycles in \S~\ref{sec:num}. Let 
$\sN^*(A)$ be the group of numerically trivial cobordism cycles on $A$ as defined in \cite[Definition~3.1]{BP}  and let $\star$ be the Pontrygin product on $\Omega^*(A)$. Then, we show
\begin{proposition}
$\sN^{*\star (g+1)}_\Q=(0)$.
\end{proposition}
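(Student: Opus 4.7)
I will mimic Bloch's Fourier-theoretic argument \cite{Bloch} in the cobordism setting, using from Proposition~\ref{prop:properties} the two key formal properties of $\sF^\Omega$: the exchange formula $\sF^\Omega(x\star y) = \sF^\Omega(x)\cdot\sF^\Omega(y)$ (swapping Pontryagin and intersection products), and invertibility. Set $I := \ker\!\bigl(e^*\colon\Omega_\Q^*(\hat A)\to\Omega_\Q^*(\mathrm{Spec}\,k)\bigr)$, where $e\colon\mathrm{Spec}\,k\to\hat A$ is the identity section.

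The first step is to show $\sF^\Omega(\sN^*_\Q)\subseteq I$. With $\sF^\Omega(x) = p_{2*}(p_1^*x\cdot\Gamma)$ for $\Gamma$ the Poincar\'e-type kernel constructed in \S\ref{sec:definition}, base-changing $p_2\colon A\times\hat A\to\hat A$ along $e$ yields a cartesian diagram with upper-left corner $A$ (embedded by $j(a)=(a,0)$) and left-hand map $\pi^A$; applying this to $\sF^\Omega(x)$ gives $e^*\sF^\Omega(x) = \pi^A_*\bigl(x\cdot j^*\Gamma\bigr)$. Since the Poincar\'e bundle is trivial along $A\times\{0\}$, its cobordism first Chern class vanishes there and $j^*\Gamma = 1$, so $e^*\sF^\Omega(x) = \pi^A_*(x)$. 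Testing the definition \cite[Def.~3.1]{BP} against $y=1\in\Omega^0_\Q(A)$ then forces $\pi^A_*(x) = 0$ for $x\in\sN^*_\Q$; hence $\sF^\Omega(x)\in I$.

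The second step is the nilpotency $I^{g+1} = 0$. Via $\psi_{\hat A}$, the ideal $I$ corresponds to $I_{CH}\otimes_\Q\Q[\mathbf{t}]$, with $I_{CH} := \ker\!\bigl(e^*_{CH}\colon CH_\Q^*(\hat A)\to\Q\bigr)$. Since $\hat A$ is connected, $e^*_{CH}$ is an isomorphism on $CH_\Q^0(\hat A)$, so $I_{CH}\subseteq\bigoplus_{p\ge 1}CH_\Q^p(\hat A)$, and since $\dim\hat A = g$ one has $CH_\Q^p(\hat A)=0$ for $p > g$. Thus $I_{CH}^{g+1} = 0$, and extension of scalars gives $I^{g+1} = 0$.

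Combining the two steps with the exchange property,
\begin{equation*}
\sF^\Omega\!\bigl(\sN^{*\star(g+1)}_\Q\bigr)\;=\;\sF^\Omega(\sN^*_\Q)^{g+1}\;\subseteq\;I^{g+1}\;=\;0,
\end{equation*}
and invertibility of $\sF^\Omega$ yields $\sN^{*\star(g+1)}_\Q = 0$. The most delicate point is the first step — the explicit equality $e^*\sF^\Omega = \pi^A_*$ on $\sN^*_\Q$ — which relies on the precise shape of $\Gamma$ constructed in \S\ref{sec:definition} and its trivialization upon pulling back along $j$ (mirroring $\mathrm{ch}(\sP)|_{A\times\{0\}} = 1$ in Chow). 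The remaining ingredients are formal consequences of Proposition~\ref{prop:properties} and the ring isomorphism $\psi$.
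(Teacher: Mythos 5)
Your proof is correct, but it follows a genuinely different route from the paper's. The paper obtains the statement as a corollary of Proposition~\ref{prop:FpOmega} with $p=g$: there one shows via Lemma~\ref{lemma:num} that $\sF^\Omega$ preserves numerical triviality, invokes \cite[Proposition~3.4]{BP} to conclude $\deg\sF^\Omega(\alpha_i)=0$, and then uses the generalized degree formula \cite[Theorem~4.4.7]{LM} to place each $\sF^\Omega(\alpha_i)$ in $\Omega^{\geqslant 1}_\Q(\hat A)+\bL^{<0}\cdot\Omega^{\geqslant 1}_\Q(\hat A)$, so that the $(g+1)$-fold product vanishes for codimension reasons. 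You instead establish the identity $e^*\circ\sF^\Omega=\pi^A_*$ — valid by the base-change axiom (A2) for the square over $e$, which is transverse because $p_{\hat A}$ is smooth, together with $j^*G(c_1^\Omega(\sP))=G(0)=1$ coming from the normalization of $\sP$ — which puts $\sF^\Omega(\sN^*_\Q(A))$ inside the augmentation ideal $I=\ker e^*$; the nilpotency $I^{g+1}=0$ then follows by transporting $I$ through $\psi_{\hat A}$ to $\bigl(\bigoplus_{p\geq 1}CH^p_\Q(\hat A)\bigr)\otimes\Q[\bt]$, using that $\psi$ commutes with $e^*$ (which holds by \eqref{eq:pullback}, since $e$ is a homomorphism of abelian varieties and $N_e$ is a trivial class). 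Both steps check out. Your route buys something real: it bypasses the generalized degree formula and the input from \cite{BP} entirely, and since you only use $\pi^A_*(x)=0$ — the case $y=1$ of Definition~\ref{def:num} — it in fact proves the stronger statement $\ker(\pi^A_*)^{\star(g+1)}=(0)$. What the paper's organization buys is the intermediate Proposition~\ref{prop:FpOmega}, the cobordism analogue of Beauville's \cite[Proposition~9]{Beauville1}, of which the present statement is only the extreme case $p=g$.
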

In \S~\ref{sec:motive}, we prove that there is a canonical decomposition of the cobordism motive (defined in \cite[\S~6]{NZ}) of an abelian variety.
\begin{theorem}
There is a canonical decomposition of the cobordism motive of an abelian variety $A$ of dimension $g$ over $k$:
\[ h_\Omega(A)=\bigoplus_{i=0}^{2g}h_\Omega^i(A)\,, \]
where $h_\Omega(A)=(A,\id_A, 0)$ is the motive of $A$, $h_\Omega^i(A)=(A,\pi_i,0)$ and the $\pi_i$'s are such that $c_\Omega(\bn)\circ\pi_i=n^i\pi_i=\pi_i\circ c_\Omega(\bn)$. Furthermore, we have $\tilde{\varphi}(h_\Omega^i(A))=h^i_{CH}(A)^\can$ is the canonical decomposition of the Chow motive of $A$.
\end{theorem}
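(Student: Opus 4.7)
\emph{Proof proposal.} The plan is to mimic Deninger--Murre's construction of the canonical Chow motivic decomposition of an abelian variety and transport their projectors to cobordism via the ring isomorphism $\psi$ of the first theorem. The key initial observation will be that the compatibilities of $\psi$ with pullbacks and pushforwards along morphisms of abelian varieties (applied in particular to the products $A\times A$ and $A\times A\times A$ and the projections between them), combined with the fact that $\psi$ is a ring isomorphism, promote $\psi$ to a functor of cobordism/Chow correspondence categories. In particular, $\psi_{A\times A}$ will carry the cobordism diagonal $[\Delta_A^\Omega]=\Delta_{A,*}(1_A)$ to the Chow diagonal $[\Delta_A^{CH}]$, and it will intertwine composition of correspondences.

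Next, I let $\{\pi_i^{CH}\}_{i=0}^{2g}\subset CH^g_\Q(A\times A)$ denote the canonical projectors of \cite{DM}: orthogonal idempotents summing to $[\Delta_A^{CH}]$ and satisfying $c_{CH}(\bn)\circ\pi_i^{CH}=n^i\pi_i^{CH}=\pi_i^{CH}\circ c_{CH}(\bn)$. Viewing them inside $CH_\Q[\mathbf{t}]^g(A\times A)$ as elements free of $\mathbf{t}$, I define
\[ \pi_i^\Omega:=\psi_{A\times A}^{-1}(\pi_i^{CH})\in\Omega^g_\Q(A\times A). \]
Because $\psi$ respects correspondence composition, and because the graph class $c_\Omega(\bn)$ corresponds under $\psi$ to $c_{CH}(\bn)$ (by pullback/pushforward compatibility applied to $\bn$), the orthogonality, idempotency, sum-to-identity, and eigenspace relations for the $\pi_i^{CH}$ transport verbatim to the $\pi_i^\Omega$, yielding the decomposition $h_\Omega(A)=\bigoplus_{i=0}^{2g}(A,\pi_i^\Omega,0)$. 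For the final claim, $\tilde\varphi$ is the functor induced by the canonical transformation $\Omega_\Q\to CH_\Q$, which under $\psi$ is identified with the projection $CH_\Q[\mathbf{t}]^*\to CH_\Q^*$ sending $\mathbf{t}\mapsto 0$; since $\pi_i^{CH}$ is $\mathbf{t}$-free, this gives $\tilde\varphi(\pi_i^\Omega)=\pi_i^{CH}$, so $\tilde\varphi(h_\Omega^i(A))=h^i_{CH}(A)^\can$.

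The main obstacle I anticipate is the verification that $\psi$ intertwines composition of correspondences, since a priori it is only known to commute with pullback, pushforward, and multiplication for individual abelian varieties. Unwinding the definition of correspondence composition and applying these three compatibilities in turn on $A\times A$ and $A\times A\times A$ should reduce this to a formal diagram chase, but care will be needed with the gradings (given that $\psi$ takes values in a polynomial ring rather than in $CH_\Q$ directly), and one must check that the lifted projectors $\pi_i^\Omega$ land in the correct summands of the Beauville decomposition of the previous theorem.
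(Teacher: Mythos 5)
Your proposal is correct and follows essentially the same route as the paper: Theorem~\ref{theorem:decomp} there is proved by setting $\pi_i=\psi_{A\times A}^{-1}(p_i^{\can})$ for Scholl's canonical Chow projectors (viewed $\mathbf{t}$-free in $CH_\Q[\mathbf{t}]$) and using that $\psi$ intertwines composition of correspondences, and the corollary then identifies $\varphi(\pi_i)=p_i^{\can}$ exactly as you do via the specialization $\mathbf{t}\mapsto 0$. The point you flag as the main obstacle --- that $\psi$ respects correspondence composition --- is precisely the step the paper asserts from the compatibility of $\psi$ with products, pullbacks and pushforwards between abelian varieties, so your caution is well placed but the argument goes through as you outline.
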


\section{Oriented cohomology theories and Algebraic cobordism}

 In \cite{LM}, inspired by the work of Quillen on complex differentiable manifolds, Levine and Morel introduced the notion of an oriented cohomology theory: a contravariant functor $A^*$ from $\Sm_k$ to graded rings together with a collection of push-forward maps $f_*$ associated to projective morphisms. This family is meant to respect functoriality and to be compatible with the pull-back morphisms $g^*$ on cartesian squares every time $f$ and $g$ are transverse. Finally, the functor is supposed to satisfy the projective bundle formula, which expresses the evaluation of $A^*$ on a projective bundle in terms of that of the base, together with the extended homotopy property, which requires $p^*:A^*(X)\rightarrow A^*(V)$ to be an isomorphism for every vector bundle $E\rightarrow V$ and every $E$-torsor $p:V\rightarrow X$. As one might expect, a morphism of oriented cohomology theories is a natural transformation of functors which is also compatible with the push-forward morphisms $f_*$.  
  For the precise definition we refer the reader to \cite[Definition~1.1.2]{LM}. Important examples of functors which are also oriented cohomology theories includes the Chow ring $CH^*$ and $K^0[\beta,\beta^{-1}]$, a graded version of the Grothendieck ring of vector bundles. 

    A relevant feature of oriented cohomology theories is that they allow a theory of Chern classes. Even though in order to establish it for any bundle $E\rightarrow X$ it is necessary to rely on the projective bundle formula, the first Chern class of a line bundle $L\rightarrow X$ can be defined by making use only of push-forward and pull-back morphisms: if $s$ denotes the zero section of $L$ one sets $c_1(L):=s^*s_* 1_X\in A^*(X)$ and defines the first Chern class operator $\tilde{c}_1(L):A^*(X)\rightarrow A^{*+1}(X)$ as multiplication by $c_1(L)$. Once the first Chern class is available, one may consider how it relates to the tensor product: given two line bundles $L$ and $M$ over some smooth scheme $X$, what is the relation between $c_1(L)$, $c_1(M)$ and $c_1(L\otimes M)$? While for the Chow group one has $c_1(L\otimes M)=c_1(L)+c_1(M)$, the equality is not true in general for oriented cohomology theories and one is forced to replace the usual addition with a formal group law:
$$c_1(L\otimes M)=F_A(c_1(L), c_1(M))$$
for a certain $F_A\in A^*(k)[[u,v]]$. A commutative formal group law of rank one $(R,F_R)$ is constituted by a ring $R$ and a power series $F_R\in R[[u,v]]$ satisfying conditions which are analogues of those for the operation in a group. For instance, the analogue of the associative property reads
$$F_R(F_R(u,v),w)=F_R(u,F_R(v,w))\in R[[u,v,w]]\ .$$
In \cite{Lazard}, Lazard identified the universal such object $(\bL,F)$ and proved that the ring of coefficients, now known as the Lazard ring, is isomorphic to $\Z[a_1,a_2,\ldots]$. In this context, the universality means that for every formal group law $(R,F_R)$ there exist a unique ring homomorphism $\phi_R:\bL\rightarrow R$ such that $\phi_R(F)=F_R$, where $\phi_R(F)$ stands for the power series obtained by applying $\phi_R$ to the individual coefficients of $F$. Since it will be needed later on, let us add that the Lazard ring can be made into a graded ring $\bL^*$ by setting $\text{deg}\, a_i=-i$.

Taking into consideration formal group laws makes it evident that the analogy with the situation in topology does not end with the introduction of oriented cohomology theories. In fact, in \cite{Quillen}, Quillen proved that complex cobordism $MU^*$ is universal among complex oriented cohomology theories, that $MU^*(pt)\simeq \bL^*$ and finally that its formal group law is the universal one. From this perspective, the theory of algebraic cobordism $\Omega^*$, developed in \cite{LM} by Levine and Morel, represents the exact analogue of $MU^*$ as one has the following theorems:

\begin{theorem}[{\cite[Theorem~1.2.6]{LM}}]
Let $k$ be a field of characteristic $0$. Then, given any oriented cohomology theory $A^*$ on $\Sm_k$, there is a unique morphism
\[ \nu_A:\Omega^*\to A^* \]
of oriented cohomology theories.
\end{theorem}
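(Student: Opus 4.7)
The plan is to follow the construction of $\Omega^*$ given in \cite{LM} and exploit its universal description. Concretely, $\Omega^*(X)$ is presented as a quotient of a free abelian group $\mathcal{Z}^*(X)$ generated by cobordism cycles $[f:Y\to X, L_1,\dots,L_r]$, where $f$ is projective, $Y\in\Sm_k$ and the $L_i$ are line bundles on $Y$, modulo a collection of relations (dimension, section, formal group law, and the naturality requirements). The first step is therefore to define a map of graded abelian groups
\[
\tilde\nu_A:\mathcal{Z}^*(X)\longrightarrow A^*(X),\qquad [f:Y\to X, L_1,\dots,L_r]\longmapsto f_*^A\bigl(\tilde c_1^A(L_1)\circ\cdots\circ\tilde c_1^A(L_r)(1_Y)\bigr),
\]
where $f_*^A$ and $\tilde c_1^A$ denote the push-forward and first Chern class operator available in any oriented cohomology theory.

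The main step is then to verify that $\tilde\nu_A$ descends to a map on $\Omega^*(X)$. This requires checking the relations one at a time: compatibility with the dimension axiom (vanishing of $\tilde c_1^A(L_1)\cdots\tilde c_1^A(L_r)(1_Y)$ when $r>\dim Y$) follows once one has established the corresponding vanishing in any OCT, proved in \cite{LM} as a consequence of the projective bundle formula; the section axiom is immediate from the definition $c_1^A(\mathcal{O}(D))=s^*s_*1$; and the formal group law relation holds because every OCT carries a canonical formal group law in its ring of coefficients, classified by a map from the Lazard ring. I expect this to be the main obstacle, since it is precisely the content of the universality statement and the reason why Levine--Morel's construction of $\Omega^*$ imposes these specific relations: they are the minimal ones needed so that $\tilde\nu_A$ be well-defined for \emph{every} OCT $A^*$.

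Once $\nu_A:\Omega^*(X)\to A^*(X)$ is well-defined, the verification that it is a morphism of oriented cohomology theories is largely formal. Compatibility with pull-backs along a morphism $g:X'\to X$ reduces, on a generator $[f:Y\to X,\underline L]$, to the base-change property $g^*f_*=f'_*g'^*$ for a transverse square in $A^*$, together with the compatibility of $\tilde c_1^A$ with pull-back. Compatibility with push-forwards along a projective $h:X\to X'$ is even more direct, since $h_*[f:Y\to X,\underline L]=[h\circ f:Y\to X',\underline L]$ on generators and the formula for $\tilde\nu_A$ is manifestly functorial in this way.

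For uniqueness, suppose $\nu:\Omega^*\to A^*$ is any morphism of OCTs. By compatibility with pull-backs and push-forwards, the class $[f:Y\to X,\underline L]$, which in $\Omega^*(X)$ equals $f_*(\tilde c_1^\Omega(L_1)\cdots\tilde c_1^\Omega(L_r)(1_Y))$, must be sent to $f_*^A(\nu(\tilde c_1^\Omega(L_1)\cdots\tilde c_1^\Omega(L_r)(1_Y)))$. Since $\nu$ commutes with the zero-section push-pull construction that defines the first Chern class operator, one has $\nu\circ\tilde c_1^\Omega(L)=\tilde c_1^A(L)\circ\nu$, and applying this iteratively forces $\nu=\nu_A$ on every generator, hence on all of $\Omega^*(X)$.
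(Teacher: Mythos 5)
This theorem is not proved in the paper at all: it is quoted verbatim from Levine--Morel \cite[Theorem~1.2.6]{LM}, and your sketch is a faithful outline of the standard argument given there (define the map on generators via $f_*\circ\tilde c_1(L_1)\cdots\tilde c_1(L_r)$, check it kills the relations (Dim), (Sect), (FGL), and derive uniqueness from compatibility with push-forward and first Chern classes). So your proposal is correct in outline and takes essentially the same route as the cited source; the only real content you elide is the nontrivial verification that $\nu_A$ commutes with pull-backs along arbitrary (non-transverse) morphisms, which in \cite{LM} requires the deformation-to-the-normal-cone machinery.
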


\begin{theorem}[{\cite[Theorem~1.2.7]{LM}}]
For any field $k$ of characteristic $0$, the canonical homomorphism classifying $F_\Omega$
$$\phi_\Omega: \bL^*\rightarrow \Omega^*(k)$$
is an isomorphism.
\end{theorem}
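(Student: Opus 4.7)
The plan is to prove that $\phi_\Omega: \bL^* \to \Omega^*(k)$ is an isomorphism in two stages, establishing surjectivity and injectivity separately. Recall that $\phi_\Omega$ is characterized as the unique ring homomorphism sending the coefficients of the universal formal group law $F$ over $\bL^*$ to those of the formal group law $F_\Omega$ of the first Chern class operator on $\Omega^*$; under Lazard's presentation $\bL^* \simeq \Z[a_1, a_2, \ldots]$, this amounts to prescribing the image of each generator $a_i$.

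For surjectivity, I would combine two facts. First, $\Omega^*(k)$ is generated as an abelian group by cobordism classes $[Y \to \Spec k]$ with $Y$ smooth projective over $k$, essentially from the construction of $\Omega^*$ via cobordism cycles. Second, Milnor's classical description of $\bL^*$ identifies an explicit family of generators, namely the classes of the projective spaces $\P^n$ and of smooth bidegree-$(1,1)$ hypersurfaces $H_{a,b} \subset \P^a \times \P^b$. The strategy is then to show, by induction on $\dim Y$, that each $[Y]$ is a polynomial in these generators modulo the image of $\phi_\Omega$ in strictly lower dimensions. The key technical input is a \emph{generalized degree formula} for $\Omega^*$: one compares $[Y]$ with the cobordism class of a suitably chosen smooth projective variety birationally equivalent to it, and uses the blow-up formula together with the formal group law $F_\Omega$ to express the difference in terms of classes of strictly smaller dimension.

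For injectivity, the approach is to construct characteristic-number homomorphisms $\Omega^*(k) \to \bL^*$ whose composition with $\phi_\Omega$ recovers the classical total characteristic-number map on $\bL^*$. Such maps arise by applying the universality of $\Omega^*$ (Theorem~1.2.6 above) to oriented cohomology theories equipped with Landweber--Novikov-type operations, followed by push-forward to the base point. Since the classical Quillen--Milnor theorem asserts that the total Landweber--Novikov character on $\bL^*$ is injective, any $x \in \bL^*$ with $\phi_\Omega(x) = 0$ must itself vanish.

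The main obstacle is the surjectivity step, and in particular the generalized degree formula. Its proof requires resolution of singularities and weak factorization of birational maps, which is the principal reason the hypothesis $\Char k = 0$ is needed. One has to carefully control the correction terms produced by repeated blow-ups and by the non-additivity of $F_\Omega$, and verify at each stage that they can be written in terms of classes already covered by the induction; without this, the reduction to Milnor's generators cannot be completed.
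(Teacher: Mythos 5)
This statement is not proved in the paper at all: it is Levine and Morel's Theorem~1.2.7, quoted with a citation to \cite{LM} as background on algebraic cobordism. There is therefore no argument in the paper to compare yours against; what you have written is an outline of the proof of an imported external result, and it has to be judged on its own terms.

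As an outline it does follow the broad shape of the Levine--Morel argument (surjectivity by induction on dimension using a degree formula and resolution of singularities; injectivity via characteristic numbers, i.e.\ Landweber--Novikov-type operations, reducing to the classical identification of $\bL^*$ by Chern numbers). But as a proof it has two genuine gaps. First, in the surjectivity step you invoke the generalized degree formula, yet in the form in which that formula is usually stated --- and in the form used elsewhere in this paper, where the coefficients $\omega_Z$ are asserted to lie in $\bL^{<0}$ --- it already presupposes the identification $\Omega^*(k)\cong\bL^*$. In the version available before the theorem is established, the coefficients $\omega_Z$ live in $\Omega^*(k)$ itself, and when $\dim Z=0$ they sit in exactly the graded piece you are trying to control, so the induction on dimension does not close up; one needs a refined statement exhibiting the $\omega_Z$ as explicit polynomials in classes of smooth projective varieties of strictly smaller dimension. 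Without proving that refinement, the argument is circular. Second, for injectivity you must actually construct the characteristic-number homomorphisms on $\Omega^*(k)$: exhibit an oriented cohomology theory receiving the universal map (the natural candidate being a twisted theory such as $CH^*[\bt]_{(\bt)}$, exactly the construction recalled earlier in this paper), verify the axioms for it, and compute that the composite $\bL^*\to\Omega^*(k)\to\bL^*$ (or into $\Z[\bt]$) is the classical total characteristic-number map; none of this is automatic, and it is where the real work of the injectivity half lies. A minor inaccuracy: weak factorization of birational maps is not needed here --- resolution of singularities together with the localization sequence is what the degree-formula machinery actually uses.
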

\noindent Notice that, provided $\Omega^*(k)$ is identified with the Lazard ring through $\phi_\Omega$,  the evaluation of $\nu_A$ on $\Spec k$ coincides with $\phi_A$ and as a consequence one has $\nu_A(F_\Omega)=F_A$. 

We briefly sketch the construction of algebraic cobordism. As a group $\Omega^*(X)$ is obtained from the free group generated by the isomorphism classes of cobordism cycles
$$[f:Y\rightarrow X,L_1,\ldots, L_r]$$
 where $f$ is a projective morphism with $Y\in\Sm_k$ and $\{L_1, \ldots,L_r\}$ is a (possibly empty) family of line bundles over $Y$. Such a cycle has dimension $d=\text{dim}_k Y-r$ and codimension $\text{dim}_k X-d$. On this group one successively imposes three families of relations so that the quotient will satisfy three corresponding axioms (Dim), (Sect) and (FGL). Note that in order to make sense of the last axiom it is necessary to tensor the group obtained from the second quotient by the Lazard ring, so as to have at hand a formal group law. For what concerns the multiplicative structure, it is achieved by constructing pull-backs for $\lci$ morphisms through an adaptation of the method used by Fulton for Chow groups: one relies on the deformation to the normal cone to reduce to the case of a divisor, which is handled separately. 

 Let us finish by mentioning that the main technical tool used in the proofs of the various axioms is represented by the  localization sequence 
$$\Omega_*(Z)\stackrel{i_*}\longrightarrow \Omega_*(X)\stackrel{j^*}\longrightarrow \Omega_*(U)\longrightarrow 0 \ $$
for any closed embedding $i:Z\rightarrow X$ with open complement $j:U\rightarrow X$.

\subsection{Twists of oriented cohomology theory}
We recall from \cite[\S~4.1.8--9 \& \S~7.4.2]{LM} the construction of the twisting of an oriented cohomology theory on $\Sm_k$.  Let $A^*$ be an oriented cohomology theory on $\Sm_k$ and $\tau=(\tau_i)\in\prod_{i=0}^\infty A^{-i}(k)$, with $\tau_0=1$. 
\begin{definition}
The \emph{inverse Todd class operator} of a line bundle $L\to X$ is defined to be the operator on $A^*(X)$ given by the infinite sum
\[\widetilde{Td}_\tau^{-1}(L)=\sum_{i=0}^\infty\tilde{c}_1(L)^i\tau_i.\]
In \cite[Proposition~4.1.20]{LM}, Levine and Morel showed that this can be extended to any vector bundle $E\to X$ in a unique way to give an endomorphism $\widetilde{Td}_\tau^{-1}(E):A^*(X)\to A^*(X)$ of degree 0, such that if $0\to E'\to E\to E''\to 0$ is an exact sequence of vector bundles over $X$, then one has $\widetilde{Td}_\tau^{-1}(E)=\widetilde{Td}_\tau^{-1}(E')\circ\widetilde{Td}_\tau^{-1}(E'')$. Thus, it naturally extends to a map 
\[ \widetilde{Td}_\tau^{-1}:K^0(X)\to \Aut(A^*(X)).\]
In fact, on an oriented cohomology theory on $\Sm_k$, $\widetilde{Td}_\tau^{-1}(E)$ is the multiplication by a class $Td_\tau^{-1}(E):=\widetilde{Td}_\tau^{-1}(E)(1_X)\in A^*(X)$, called the \emph{inverse Todd class} of $E$. For any smooth equidimensional $Y\by{f}X$, it is shown that ${Td}_\tau^{-1}(f^*E)=f^*{Td}_\tau^{-1}(E)$. 
\end{definition}
Suppose $X, Y$ are in $\Sm_k$. Then, any  $f:Y\to X$ is an  $\lci$ morphism. Let $f=q\circ i$ be a factorization such that $i:Y\to P$ is a regular embedding and $q:P\to X$ is smooth. Letting $\sI$ be the ideal sheaf of $Y$ in $P$, we define the normal bundle $N_i$ to be the bundle over $Y$ whose dual has sheaf of sections $\sI/\sI^2$. We let $N_f\in K^0(Y)$ be the class $[N_i] -[i^*T_q]$, where $T_q$ is the relative tangent bundle associated to $q$. For any $\tau$ as above, one may construct an oriented cohomology theory on $\Sm_k$, denoted $A^*_{(\tau)}$, by  twisting the first Chern classes and the pull-back maps. If  $f^*$ and ${c}_1$ are the pull-back and the first Chern class respectively in $A^*$, then $A^*_{(\tau)}(X)=A^*(X)$ as groups and in $A^*_{(\tau)}$,  
\begin{itemize}
\item $f^*_{(\tau)}={Td}_\tau^{-1}(N_f)\cdot f^*$, where ${Td}_\tau^{-1}$ is the inverse Todd class;
\item for any line bundle $L$ over $X$, the first Chern class of $L$ in $A^*_{(\tau)}$ is ${c}_1^{(\tau)}(L)= \sum_{i=0}^\infty c_1(L)^{i+1}\tau_i$;
\item if $\cdot$ denotes the product in $A^*(X)$ and $\cdot_\tau$ denotes the product in $A^*_{(\tau)}(X)$, then $x\cdot_\tau y = Td_\tau^{-1}(N_{\delta_X})\cdot x\cdot y$, for any $x,y\in A^*(X)$, where $\delta_X:X\to X\times X$ is the diagonal morphism.
\end{itemize}
The push-forward maps  are unchanged.

If $f=q\circ i$ is a factorization such that $i:Y\to P$ is a regular embedding and $q:P\to X$ is smooth, then  note that $P$ is smooth over $k$, since $X$ is smooth and $q$ is smooth.  Thus, considering $\xymatrix@=0.4em{Y \ar[rr]^i \ar[dr] && P \ar[dl]\\ & \Spec k & }$, we get by \cite[B.7.2.]{Fulton}, the exact sequence
 \begin{equation}
 0\longrightarrow T_Y\longrightarrow i^*T_P\longrightarrow N_i\longrightarrow 0.
 \end{equation}
 Thus, in $K^0(Y)$, $[T_Y] =[i^*T_P] -[N_i]$. Also, since $q$ is smooth, $[T_q] =[T_P] -[q^*T_X]$. Hence,
 \[ N_f=[i^*T_P] -[T_Y] -i^*([T_P] -[q^*T_X])=[f^*T_X] -[T_Y].\]
 
When $X$ and $Y$ are abelian varieties, the tangent bundles $T_X$ and $T_Y$ are trivial. Thus, $f^*T_X$ is also trivial. It follows from the properties of $\widetilde{Td}_\tau^{-1}$ that 
\begin{equation}\label{eq:pullback}
\begin{array}{rl}
f^*_{(\tau)}=Td_\tau^{-1}(N_f)\cdot f^*&=Td_\tau^{-1}(f^*T_X)\cdot Td_\tau^{-1}(-T_Y)\cdot f^*\\
&=1_{A^*(Y)}\cdot (Td_\tau^{-1}(T_Y))^{-1}\cdot f^*\\
&=1_{A^*(Y)}\cdot f^*\,=\, f^*. 
\end{array}
\end{equation}
Note that, if $X$ is an abelian variety and $\delta_X:X\to X\times X$ is the diagonal map, then $Td_\tau^{-1}(N_{\delta_X})=1_{A^*(X)}$ since $X\times X$ is also an abelian variety and $\delta_X$ is an $\lci$ morphism. Thus, we obtain the following:
\begin{lemma}\label{lem:product}
For an abelian variety $X$, there is a ring isomorphism $A^*_{(\tau)}(X)\iso A^*(X)$.
\end{lemma}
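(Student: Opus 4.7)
The plan is to take as the candidate isomorphism the identity map on the underlying graded abelian group $A^*_{(\tau)}(X) = A^*(X)$, which is an equality by the definition of the twisting. Since push-forwards are unchanged in passing to $A^*_{(\tau)}$, and equation \eqref{eq:pullback} already guarantees $f^*_{(\tau)} = f^*$ for morphisms of abelian varieties, the only structural compatibility that remains to verify is that the identity preserves products.

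For this I would expand the definition of the twisted product recalled above, namely $x \cdot_\tau y = Td_\tau^{-1}(N_{\delta_X}) \cdot x \cdot y$, so the problem reduces to showing $Td_\tau^{-1}(N_{\delta_X}) = 1_{A^*(X)}$. To compute the virtual normal class I would apply the identity $N_f = [f^*T_X] - [T_Y]$ (derived above for an $\lci$ morphism $f:Y\to X$ between smooth $k$-schemes) to the diagonal $\delta_X: X \to X\times X$, obtaining $N_{\delta_X} = [\delta_X^* T_{X\times X}] - [T_X]$ in $K^0(X)$. Because $X$ is an abelian variety, so is $X\times X$, and both tangent bundles are trivial; hence $N_{\delta_X}$ is the class of a virtual trivial bundle.

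To conclude I would use that $\widetilde{Td}_\tau^{-1}$ evaluated on a trivial line bundle $\sO_X$ collapses to $\tau_0 = 1_{A^*(X)}$, since all higher terms in the defining sum $\sum_{i\ge 0}\tilde c_1(\sO_X)^i \tau_i$ vanish. Combined with the multiplicativity $\widetilde{Td}_\tau^{-1}(E) = \widetilde{Td}_\tau^{-1}(E')\circ \widetilde{Td}_\tau^{-1}(E'')$ on short exact sequences of vector bundles, and the extension of $\widetilde{Td}_\tau^{-1}$ to $K^0(X)$ as a group homomorphism into $\Aut(A^*(X))$, it follows that $\widetilde{Td}_\tau^{-1}$ sends any virtual trivial bundle, and in particular $N_{\delta_X}$, to the identity operator; applied to $1_X$ this yields $Td_\tau^{-1}(N_{\delta_X}) = 1_{A^*(X)}$, as required. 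I do not foresee any real obstacle: the lemma is in essence a clean restatement of the preceding computation, and the only care needed is in tracking how each of the three pieces of the twisted structure (product, pullback, pushforward) reduces to its untwisted counterpart on an abelian variety.
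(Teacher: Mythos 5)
Your proposal is correct and follows essentially the same route as the paper: take the identity on the underlying group and reduce the multiplicativity check to $Td_\tau^{-1}(N_{\delta_X})=1_{A^*(X)}$, which holds because the tangent bundles of $X$ and $X\times X$ are trivial, so $N_{\delta_X}$ is a virtual trivial bundle and $\widetilde{Td}_\tau^{-1}$ sends it to the identity operator. The paper simply records this last computation in the remark immediately preceding the lemma and then cites it, whereas you spell it out inside the proof; the content is identical.
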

\begin{proof}
Since $A^*(X)$ and $A^*_{(\tau)}(X)$ coincide as groups, we only need to verify that the identity map is compatible with the multiplicative structure.  Let $\cdot$ denote the product in $A^*(X)$ and $\cdot_{\tau}$ denote the product in $A^*_{(\tau)}(X)$. Then, for $\alpha, \beta\in  A^*(X)$,
\[\alpha\cdot_{\tau}\beta=Td_\tau^{-1}(N_{\delta_X})\cdot \alpha\cdot\beta=\alpha\cdot\beta. \]
Thus the map $A^*_{(\tau)}\to A^*$ identifying the two groups is a ring isomorphism as well.
\end{proof}

Let $\Z[\bt] :=\Z[t_1,\ldots, t_n, \ldots ]$ be the graded ring of polynomials on variables $t_i, i>0$, of degree $-i$. We may form an oriented cohomology theory $A[\bt]^*$ from $A^*$ by extension of scalars by defining $A[\bt]^*(X):=A^*(X)\otimes_\Z \Z[\bt]$. Now, consider the oriented cohomology theory $CH_\Q[\mathbf{t}]_{(\mathbf{t})}$ defined in this way. We have the following theorem from \cite{LM}:
\begin{theorem}\label{theorem:twisted}
Let $k$ be a field of characteristic $0$. Then, the canonical morphism of oriented cohomology theories induces an isomorphism over $\Q$:
\[ \nu_{CH[\bt]^{(\bt)}}\otimes\Q : \Omega^*_\Q\to CH_\Q[\mathbf{t}]^*_{(\mathbf{t})}. \]
where $\nu_{CH[\bt]_{(\bt)}}$ is the canonical morphism given by universality of $\Omega^*$.
\end{theorem}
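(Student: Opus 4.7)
The plan is to recognize $B^* := CH_\Q[\mathbf{t}]^*_{(\mathbf{t})}$ as an oriented cohomology theory on $\Sm_k$ whose associated formal group law over $\Q$ is a universal one, and then combine the universality of $\Omega^*_\Q$ (Theorem~1.2.6) with Levine-Morel's structural machinery for rational algebraic cobordism.

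First I would compute the formal group law of $B^*$. Writing $\ell(u) := \sum_{i\geq 0} t_i u^{i+1} \in \Q[\mathbf{t}][[u]]$ (with $t_0 = 1$), the twist recipe recalled in the excerpt gives $c_1^{(\mathbf{t})}(L) = \ell(c_1(L))$. Because the untwisted theory $CH^*_\Q$ carries the additive formal group law $F(u,v) = u+v$, the twisted law on $B^*$ is forced by the identity $c_1^{(\mathbf{t})}(L\otimes M) = c_1^{(\mathbf{t})}(L) +_{F_{B^*}} c_1^{(\mathbf{t})}(M)$ to equal
\[ F_{B^*}(u,v) \;=\; \ell\bigl(\ell^{-1}(u)+\ell^{-1}(v)\bigr), \]
so its logarithm is precisely $\ell^{-1}(u)$.

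Next I would verify that the classifying map $\phi_{B^*}\colon \bL_\Q \to B^*(\Spec k) = \Q[\mathbf{t}]$ is a graded ring isomorphism. Over any $\Q$-algebra, the formal logarithm establishes a bijection between one-dimensional commutative formal group laws and power series of the form $u+O(u^2)$. Hence freely prescribing the coefficients $\{t_i\}_{i\geq 1}$ of $\ell$ (equivalently, of $\ell^{-1}$) is the same as prescribing the universal FGL over a $\Q$-algebra. Combined with Lazard's identification $\bL_\Q \cong \Q[b_1,b_2,\ldots]$ with $\deg b_i = -i = \deg t_i$, this makes $\phi_{B^*}$ a graded ring isomorphism, with the matrix relating the two sets of generators being upper-triangular in the natural filtration.

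Finally, I would promote this coefficient-level isomorphism to an isomorphism of oriented cohomology theories on all of $\Sm_k$. The canonical morphism $\nu_{B^*}\otimes\Q$ restricts on $\Spec k$ to $\phi_{B^*}$, so by the previous step is an isomorphism there, and Levine-Morel's structural description of $\Omega^*_\Q(X)$ as a formal twist of $CH^*_\Q(X)\otimes_\Q \bL_\Q$ (by the logarithm of $F_\Omega$) forces $\nu_{B^*}\otimes\Q$ to be an isomorphism on every $X\in \Sm_k$. The main obstacle is precisely this last step: an isomorphism at the level of coefficient rings does not in general propagate to an isomorphism of the entire OCTs, and one must invoke the deeper input of \cite{LM} — namely the generalized degree formula together with the explicit realization of $\Omega^*_\Q$ as a twist of $CH^*_\Q$ with coefficients in $\bL_\Q$ — to bridge the gap.
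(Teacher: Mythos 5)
The paper offers no proof of this statement at all: it is quoted directly from \cite{LM} (``We have the following theorem from \cite{LM}''), so there is no internal argument to compare yours against. Your coefficient-level analysis is correct, and it is in fact the computation the paper silently relies on later when proving $\psi_{\hat A}\circ\sF^\Omega=\sF^{CH}\circ\psi_A$: the twisted first Chern class is $\ell(c_1^{CH}(L))$ with $\ell(u)=\sum_{i\geq 0}t_iu^{i+1}$, the twisted formal group law is $\ell(\ell^{-1}(u)+\ell^{-1}(v))$ with logarithm $\ell^{-1}$, and the classifying map $\bL_\Q\to\Q[\bt]$ is a graded isomorphism because over a $\Q$-algebra a formal group law is freely determined by its logarithm (equivalently by its exponential $\ell$), whose coefficients differ from the $t_i$ by an invertible triangular change of generators. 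The one caveat concerns your final step: the ``structural description of $\Omega^*_\Q(X)$ as a formal twist of $CH^*_\Q(X)\otimes_\Q\bL_\Q$'' that you invoke to globalize the coefficient isomorphism is not an auxiliary tool but is essentially the theorem being proved, so as written the argument is circular at that point. A self-contained proof has to go through the intermediate machinery of \cite{LM} instead --- the generalized degree formula, the identification $CH^*\cong\Omega^*\otimes_{\bL}\Z$, and the universality of the twisted theory $CH_\Q[\bt]^*_{(\bt)}$ among rational theories with a prescribed formal group law. Since you flag this dependence explicitly and the paper itself treats the entire statement as a black-box citation, your proposal is at least as complete as the paper's treatment, and the verifiable portions of it are sound.
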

In the rest of the article, we will write $\nu$ to denote $\nu_{CH[\bt]^{(\bt)}}\otimes\Q$.
Combining Theorem~\ref{theorem:twisted} and Lemma~\ref{lem:product}, we get
\begin{theorem}\label{theorem:iso}
Let $X$ be an abelian variety over a field $k$ of characteristic $0$. Then, we have an  isomorphism of rings
\[ \psi_X:\Omega^*_\Q(X)\to CH_\Q[\mathbf{t}]^*(X).\]
\end{theorem}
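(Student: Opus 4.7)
The statement follows immediately by composing the two preceding results. My plan is as follows.

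First, I would invoke Theorem~\ref{theorem:twisted}, which provides a morphism of oriented cohomology theories
\[ \nu \colon \Omega^*_\Q \longrightarrow CH_\Q[\mathbf{t}]^*_{(\mathbf{t})} \]
that is an isomorphism. Since any morphism of oriented cohomology theories is in particular a ring homomorphism on each smooth $k$-scheme, evaluating $\nu$ at the abelian variety $X$ yields a ring isomorphism
\[ \nu_X \colon \Omega^*_\Q(X) \stackrel{\sim}{\longrightarrow} CH_\Q[\mathbf{t}]^*_{(\mathbf{t})}(X). \]

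Second, I would apply Lemma~\ref{lem:product} to the oriented cohomology theory $A^* := CH_\Q[\mathbf{t}]^*$ with the twisting parameter $\tau = \mathbf{t} = (1, t_1, t_2, \ldots)$ (which satisfies $\tau_0 = 1$ and $\tau_i \in A^{-i}(k)$ since $t_i$ has degree $-i$ in $\Z[\mathbf{t}] \subset CH_\Q[\mathbf{t}]^*(k)$). Because $X$ is an abelian variety, the lemma supplies a ring isomorphism
\[ CH_\Q[\mathbf{t}]^*_{(\mathbf{t})}(X) \stackrel{\sim}{\longrightarrow} CH_\Q[\mathbf{t}]^*(X). \]

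Finally, I would define $\psi_X$ as the composition of these two ring isomorphisms. Since both factors are bijective ring homomorphisms, so is $\psi_X$.

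There is essentially no obstacle: the content of the theorem is already contained in the two ingredients, and the only thing to verify is that $\mathbf{t}$ is an admissible twisting parameter, which is immediate from the grading conventions. The substance of the result lies entirely in Theorem~\ref{theorem:twisted} (due to Levine--Morel) and in the computation, carried out in Lemma~\ref{lem:product}, that the inverse Todd class contributions from both pullbacks and the diagonal are trivial on an abelian variety because its tangent bundle is trivial.
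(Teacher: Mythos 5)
Your proposal is correct and follows exactly the route the paper takes: the paper obtains $\psi_X$ precisely by combining Theorem~\ref{theorem:twisted} with Lemma~\ref{lem:product}, i.e.\ composing the Levine--Morel isomorphism $\nu$ with the identification $CH_\Q[\mathbf{t}]^*_{(\mathbf{t})}(X)\cong CH_\Q[\mathbf{t}]^*(X)$ valid for abelian varieties. Your additional check that $\mathbf{t}$ is an admissible twisting parameter is a harmless (and correct) verification the paper leaves implicit.
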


\section{Recollection of $A$-motives}
In \cite[\S~5-6]{NZ}, for an oriented cohomology theory $A^*$ on $\Sm_k$, Nenashev and Zainoulline constructed the $A$-motive of a smooth projective variety $X$ over $k$ , following the ideas of \cite{Manin}. We briefly recall its construction.

\subsection{$A$-correspondences}
Let $X$ and $Y$ be smooth projective varieties over an algebraically closed field $k$ of characteristic $0$. We recall some facts about the category of \emph{$A$-correspondences} from \cite{NZ}. Given an oriented cohomology theory $A^*$, we define the category of $A$-correspondences, denoted $Cor_A$, as
\begin{itemize}
\item $Ob(Cor_A):=Ob(\SmProj_k)$;
\item $\Hom_{Cor_A}(X,Y):=A^*(X\times Y)$;
\item the composition of morphisms $\alpha\in A^*(X\times Y)$ and $\beta\in A^*(Y\times Z)$ is the correspondence 
\[\beta\circ\alpha := (p_{XZ})_*(p_{XY}^*(\alpha)\cdot p_{YZ}^*(\beta)) \;\in A^*(X\times Z). \]where $p_{XZ}$, $p_{XY}$ and $p_{YZ}$ are the respective projections from $X\times Y\times Z$.
\end{itemize}
There is a functor $c_A:\SmProj_k^{op}\to Cor_A$ given by $c_A(X)=X$ and $c_A(f)=(\Gamma_f)_*(1_{A(X)})\in A^*(Y\times X)$ for a morphism $f:X\to Y$, where $\Gamma_f:X\longby{(f,\id)}Y\times X$ is the graph morphism. For $\alpha\in A^*(X\times Y)$, we have the transpose $\alpha^t:=\iota^*(\alpha)\in A^*(Y\times X)$, where $\iota:Y\times X\to X\times Y$ is given by swapping the variables. 

For a correspondence $\alpha\in \Hom_{Cor_A}(Y,X)$, we define its realization $\sR_A(\alpha):A^*(Y)\to A^*(X)$ as follows: identify $A^*(Y)$ with $\Hom_{Cor_A}(pt,Y)$ and note that $\alpha$ defines a map $\Hom_{Cor_A}(pt,Y)\to \Hom_{Cor_A}(pt,X)$ given by composition with $\alpha$. This defines the map $\sR_A(\alpha)$ as 
\[ \beta\mapsto p_{X*}(\alpha\cdot p_Y^*\beta), \]
where $p_X$ and $p_Y$ are the respective projections to $X$ and $Y$ from $Y\times X$. We will denote $\sR_A$ by $\sR$ when there is no confusion. Note that the projection formula for the oriented cohomology theory $A^*$ implies that $\sR(c_A(f))=f^*$, so that the functor $A^*:\SmProj_k^{op}\to \Ab^\Z$ factors through $Cor_A$.

If $\alpha\in A^*(X\times Y)$ and $\beta\in A^*(Y\times Z)$, it follows from the definition that
\begin{equation}\label{eq:compositionR}
\sR(\beta)\circ \sR(\alpha)=\sR(\beta\circ\alpha)=\sR\big((p_{XZ})_*(p_{XY}^*(\alpha)\cdot p_{YZ}^*(\beta))\big),
\end{equation}

Also, using the projection formula, we get that, for $f:X\to Y$, $\alpha\in A^*(Z\times  Y)$ and $\beta\in A^*(X\times Z)$,
\begin{equation}\label{eq:graphcomp}
c_A(f)\circ\alpha=(\id_Z\times f)^*(\alpha)\quad\text{ and }\quad\beta\circ c_A(f)=(f\times \id_Z)_*(\beta).
\end{equation}
Applying transpose, we also get that for $\gamma\in A^*(Y\times Z)$ and $\delta\in A^*(Z\times X)$,
\begin{equation}\label{eq:graphcompT}
\gamma\circ c_A(f)^t=(f\times \id_Z)^*(\gamma)\quad\text{ and }\quad c_A(f)^t\circ\delta=(\id_Z\times f)_*(\delta).
\end{equation}
The grading on $A^*$ induces a grading on $\Hom_{Cor_A}$ given as \[\Hom_{Cor_A}^n(X,Y):=\oplus_iA^{n+d_i}(X_i\times Y), \] where $X_i$ are the irreducible components of $X$ and $d_i=\dim X_i$, making $\Hom_{Cor_A}$ into a graded algebra under composition. $Cor_A$ forms an additive category by defining $X\oplus Y=X\coprod Y$.

\subsection{$A$-motives}
\label{subsec:motdef}
\begin{definition}
Consider the category $Cor_A^0$ with thee same objects as $Cor_A$ and $\Hom_{Cor_A^0}(X,Y):= \Hom_{Cor_A}^0(X,Y)$. The pseudo-abelian completion of $Cor_A^0$ is called the \emph{category of effective $A$-motives}, denoted by $\mc{M}_A^\eff$. That is, the objects in $\mc{M}_A^\eff$ are pairs $(X,p)$ where $X\in Ob(Cor_A)$ and $p\in \Hom_{Cor_A^0}(X,X)$ is a projector (that is, $p\circ p=p$), and
\[ \Hom_{\sM_A^\eff}((X,p),(Y,q)) = \dfrac{\{\alpha\in\Hom_{Cor_A^0}(X,Y)\vert\alpha\circ p=q\circ\alpha\}}{\{\alpha\in\Hom_{Cor_A^0}(X,Y)\vert\alpha\circ p=q\circ\alpha=0\}}. \]
The \emph{category of $A$-motives}, denoted by $\mc{M}_A$, has as objects triplets $(X,p,m)$ where $(X,p)$ is an object in $\mc{M}_A^\eff$ and $m\in\Z$. The morphisms are defined as:
\[ \Hom_{\sM_A}((X,p,m),(Y,q,n)) = \dfrac{\{\alpha\in\Hom_{Cor_A}^{n-m}(X,Y)\vert\alpha\circ p=q\circ\alpha\}}{\{\alpha\in\Hom_{Cor_A}^{n-m}(X,Y)\vert\alpha\circ p=q\circ\alpha=0\}}. \]
\end{definition}
Note that this means $\id_{(X,p,0)}=\id_X=p\in\Hom_{\sM_A}((X,p,0),(X,p,0))$. The motive $(X,\id_X,0)$ is called the motive of $X$ and denoted by $h_A(X)$. The additive structure of $Cor_A$ induces a direct sum in the category $\mc{M}_A$.

Following \cite[\S~6]{Manin}, we call an irreducible variety $X$ in $\SmProj_k$ to be $A$-\emph{special} if it has a $k$-point, and for any morphism $e:\Spec k\to X$, $e_*(1_k)\in A^*(X)$ is independent of $e$. For an $A$-special variety $X$ of dimension $d$, we define the projectors $p^X_0$ and $p^X_{2d}$ in $\Hom_{Cor_A^0}(X,X)$ by the formulas
\begin{equation}\label{eq:p02d}
p^X_0 = e_*(1_k)\times 1_X\quad\text{ and }\quad p^X_{2d}=1_X\times e_*(1_k).
\end{equation}  
We  denote $h^i_A(X)=(X,p^X_i,0)$ for $i=0,2d$.

\section{Fourier-Mukai operator}
\label{sec:definition}

Let $A$ be an abelian variety of dimension $g$ and let $\hat{A}$ be its dual abelian variety. Denote by $\mc{P}$ the normalized Poincar\'e bundle on $A\times\hat{A}$. Here, ``normalized" means that $i^*\mc{P}$ and $\hat{i}^*\mc{P}$ are trivial, where $i:0\times\hat{A}\to A\times\hat{A}$ and $\hat{i}:A\times\hat{0}\to A\times\hat{A}$ are inclusions.

We wish to define an operator $\mc{F}:\Omega^*_\Q(A)\to\Omega^*_\Q(\hat{A})$ which has the usual properties of the Fourier-Mukai transformation on Chow rings or K-theory (see \cite{Beauville1}). 

Note that there is a Fourier-Mukai transformation on $CH_\Q[\mathbf{t}]$, that is induced from the one on $CH_\Q$. This is defined as the map 
\[ \sF_A^{CH}=\sR_{CH_\Q[\mathbf{t}]}(ch(\sP)):CH_\Q[\mathbf{t}](A)\longrightarrow CH_\Q[\mathbf{t}](\hat{A})\]
where by abuse of notation, $ch(\sP)$ denotes the Chern character of $\sP$ considered as an element in $CH_\Q[\mathbf{t}]^*(A\times\hat{A})$ by extension of scalars.

We now imitate this to define a Fourier-Mukai operator on $\Omega^*_\Q$. By \cite[Lemma~4.1.29]{LM}, there is a unique power series $l_{\bL}(u)=\sum_{i\geq 0}b_iu^{i+1}\in\bL\otimes\Q[[u]]$ with $b_0=1$ called the \emph{logarithm} such that $l_{\bL}(F_\Omega(u,v))=l_{\bL}(u)+l_{\bL}(v)$. Define $G:=\exp\circ l_{\bL}\in\bL\otimes\Q[[u]]$, where $\exp$ denote the exponential power series, $\exp(u)=\disp\sum_{i\geq 0}\dfrac{u^i}{i!}$. Note that, $G$ is a power series such that $G(F_\Omega(u,v))=G(u)G(v)$.
\begin{definition}
We define the Fourier-Mukai operator $\sF_A^\Omega :\Omega^*_\Q(A)\to\Omega^*_\Q(\hat{A})$ to be  $\sF_A^\Omega :=\sR_\Omega(G(c_1^\Omega(\sP)))$.
The dual operator $\hat{\sF}_A^\Omega$ is defined to be $\sR_
\Omega(G(c_1^\Omega(\sP))^t)$. When there is no confusion, we will denote $\sF_A^\Omega$ and $\hat{\sF}_A^\Omega$ by $\sF^\Omega$ and $\hat{\sF}^\Omega$ respectively.
\end{definition}

$\sF^\Omega$ is related to $\sF^{CH}$ in the following way:
\begin{proposition}
We have
\[ \psi_{\hat{A}}\circ\sF^\Omega=\sF^{CH}\circ\psi_A. \]
\end{proposition}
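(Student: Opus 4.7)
The plan is to use that $\psi_A$ factors as the universal morphism $\nu:\Omega^*_\Q\to CH_\Q[\bt]^*_{(\bt)}$ followed by the identification $CH_\Q[\bt]^*_{(\bt)}(X)\cong CH_\Q[\bt]^*(X)$ provided by Lemma~\ref{lem:product}, and then to reduce the stated compatibility to the single identity $\nu(G(c_1^\Omega(\sP)))=ch(\sP)$ in $CH_\Q[\bt]^*(A\times\hat A)$.

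The first step is to unwind $\nu_{\hat A}(\sF^\Omega(\beta))$. Since $\nu$ is a morphism of oriented cohomology theories landing in $CH_\Q[\bt]^*_{(\bt)}$, it commutes with projective push-forwards and with the twisted product $\cdot_{(\bt)}$ and the twisted pullback $p_A^{*(\bt)}$. Therefore
\[
\nu_{\hat A}(\sF^\Omega(\beta))=p_{\hat A*}\bigl(\nu(G(c_1^\Omega(\sP)))\cdot_{(\bt)} p_A^{*(\bt)}(\nu_A(\beta))\bigr).
\]
Because $A$, $\hat A$ and $A\times\hat A$ are abelian varieties, Lemma~\ref{lem:product} identifies $\cdot_{(\bt)}$ with the standard product, equation~\eqref{eq:pullback} identifies $p_A^{*(\bt)}$ with the standard pullback, and push-forwards are in any case unaffected by the twist. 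The right-hand side thus rewrites as $p_{\hat A*}\bigl(\nu(G(c_1^\Omega(\sP)))\cdot p_A^*(\psi_A(\beta))\bigr)$, and, granted the key identity, this is $\sF^{CH}(\psi_A(\beta))$ by definition.

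The substantive step is the identity $\nu(G(c_1^\Omega(L)))=ch(L)=\exp(c_1^{CH}(L))$ for a line bundle $L$, applied to $L=\sP$. Unwinding, $\nu(G(c_1^\Omega(L)))$ is obtained by pushing the coefficients of $G=\exp\circ l_{\bL}$ forward through the classifying map $\phi_{(\bt)}:\bL\otimes\Q\to CH_\Q[\bt]^*(k)\otimes\Q$ and then evaluating at $c_1^{(\bt)}(L)=\sum_{i\geq 0}c_1^{CH}(L)^{i+1}t_i=:f(c_1^{CH}(L))$. The FGL of $CH_\Q[\bt]^*_{(\bt)}$, obtained from the additive FGL of Chow theory through this change of variable, has logarithm equal to the compositional inverse $f^{-1}$; by uniqueness of the logarithm in characteristic zero, $\phi_{(\bt)}(l_{\bL})=f^{-1}$, and hence $\phi_{(\bt)}(G)=\exp\circ f^{-1}$. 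Evaluating at $f(c_1^{CH}(L))$ gives $\exp(c_1^{CH}(L))=ch(L)$, as required.

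The main obstacle is the bookkeeping in this last step: one must carefully distinguish twisted and untwisted first Chern classes, track how the classifying map $\phi_{(\bt)}$ acts on the coefficients of formal power series in $\bL\otimes\Q$, and verify that the logarithm of the twisted additive FGL is precisely the inverse of $f$. Once these identifications are pinned down, everything else is driven formally by the naturality of $\nu$ and the coincidence of twisted and untwisted oriented structures on abelian varieties.
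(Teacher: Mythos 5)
Your proposal is correct and follows essentially the same route as the paper: factor $\psi$ through $\nu$ and the untwisting isomorphism on abelian varieties, then identify $\nu(l_{\bL})$ with the compositional inverse of the change-of-variable series $u\mapsto c_1^{(\bt)}(L)$ via uniqueness of the logarithm, so that $\nu(G(c_1^\Omega(\sP)))=\exp(c_1^{CH}(\sP))=ch(\sP)$. Your $f$ and $f^{-1}$ are exactly the paper's $\lambda_{(\bt)}$ and $l_{(\bt)}$, and the remaining bookkeeping (compatibility with push-forwards, products, and pullbacks on abelian varieties) matches the paper's use of $\psi'$.
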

\begin{proof}
Note that in $CH_\Q[\mathbf{t}]^*_{(\mathbf{t})}$, $c^{(\bt)}_1(L)=\lambda_{(\bt)}(c^{CH}_1(L))$ where $\lambda_{(\bt)}\in \Z[\bt][[u]]$ is given as $\lambda_{(\bt)}(u)=u+\sum_{i\geq 2}t_{i-1}u^i$. This means that there is a power series $l_{(\bt)}$ such that $l_{(\bt)}(\lambda_{(\bt)}(u))=u$. Thus, $l_{(\bt)}(c^{(\bt)}_1(L))=c^{CH}_1(L)$. This implies that $l_{(\bt)}$ is the logarithm of the formal group law of $CH_\Q[\mathbf{t}]^*_{(\mathbf{t})}$. Also, note that, since $\nu$ is a morphism of oriented cohomology theories, we get $\nu(F_\Omega)=F_{(\bt)}$.
Thus,
\[\nu(l_{\bL}(F_\Omega(x,y)))=\nu(l_{\bL}(x))+\nu(l_{\bL}(y))\Rightarrow \nu(l_{\bL})(F_{(\bt)}(\nu(x),\nu(y))=\nu(l_{\bL})(\nu(x))+\nu(l_{\bL})(\nu(y)). \]
Since $\nu$ is an isomorphism and $l_{(\bt)}$ is unique, $\nu(l_{\bL})=l_{(\bt)}$. Hence, $\nu(G)=\exp\circ l_{(\bt)}$.

Denote by $\psi'_A$ the ring isomorphism $ CH_\Q[\mathbf{t}]^*_{(\mathbf{t})}(A)\to CH_\Q[\mathbf{t}]^*(A)$ that is the identity on elements, so that $\psi_A=\psi'_A\circ\nu$. $\psi'_A$   commutes with pushforwards and with the pullbacks of maps between abelian varieties.

Let $p_A$ and $p_{\hat{A}}$ be the respective projections of $A\times \hat{A}$ to $A$ and $\hat{A} $. 
Then, for $x\in\Omega_\Q(A)$, 
\begin{multline*} 
\psi_{\hat{A}}\circ\sF^\Omega(x) =\psi'_{\hat{A}}\circ\nu(p_{\hat{A}*}(p_A^*x\cdot G(c_1^\Omega(\sP)))=\psi'_{\hat{A}}(p_{\hat{A}*}(p_A^*\nu(x)\cdot \exp\circ l_{(\bt)}(c_1^{(\bt)}(\sP)))\\
=p_{\hat{A}*}(\psi'_{A\times\hat{A}}(p_A^*\nu(x))\cdot \psi'_{A\times\hat{A}}(\exp(c_1^{CH}(\sP))))=p_{\hat{A}*}(p_A^*\psi_A(x)\cdot ch(\sP))=\sF^{CH}\circ\psi_A(x), 
\end{multline*} which finishes the proof.
\end{proof}

Also, note that if $\sF$ denotes the Fourier-mukai transform on $CH_\Q$ defined in \cite{Beauville1}, then for $x=\disp\sum_{I=(n_1,\ldots,n_r,\ldots)} x_I\bt^I\in CH_\Q[\bt](A)$, we have 
\begin{equation}\label{eq:FCH}
\sF^{CH} (x) = \sum_{I=(n_1,\ldots,n_r,\ldots)}\sF(x_I) \bt^I.
\end{equation}
  Here, $\bt^I=t_1^{n_1}\cdots t_r^{n_r}\cdots$, all but finitely many $n_r$s being zero.

\section{Properties of the Fourier-Mukai operator}

Let $A$ be an abelian variety of  dimension $g$.  We denote by $\star$  the Pontrjagin product on $\Omega^*_\Q$, that is for $x,y\in \oq(A)$, we define \[ x\star y:= \mu_*(p_1^*x\cdot p_2^*y),\]
where $\mu$ is the multiplication on the abelian variety $A$, $p_1$ and $p_2$ are the projections of $A\times  A$ onto the first and second factors, and $\cdot$ is the usual product on $\oq$.

The Fourier-Mukai operator on $\Omega_\Q$ of abelian varieties  has the following properties:
\begin{proposition}
\label{prop:properties}
\begin{enumerate}
\item Let $\sigma_A$ denote the endomorphism of the abelian variety $A$ given by multiplication by $-1$. Then,
 \[\hat{\sF}^\Omega\circ\sF^\Omega = (-1)^g(\sigma_A)^* \quad \text{ and }\quad
\sF^\Omega\circ\hat{\sF}^\Omega = (-1)^g(\sigma_{\hat{A}})^*.\] 
\item For $x, y\in \Omega_\Q(A)$, we have
\[ \sF^\Omega(x\star y)=\sF^\Omega(x)\sF^\Omega(y) \quad \text{ and } \quad \sF^\Omega(xy)=(-1)^g\sF^\Omega(x)\star\sF^\Omega(y). \]
\item Let $f:A\to B$ be an isogeny of abelian varieties, and $\hat{f}:\hat{B}\to\hat{A}$ be the dual isogeny. Then we have
\[ \sF_B^\Omega\circ f_*=\hat{f}^*\circ\sF_A^\Omega \quad \text{ and }\quad \sF_A^\Omega\circ f^*=\hat{f}_*\circ \sF_B^\Omega. \]
\item Let $x\in \Omega_\Q^p(A)$. Write $\sF^\Omega(x)=\disp\sum_{q\leq g}y_q$, where $y_q\in \Omega_\Q^q(\hat{A})$. Then, we have, for $n\in\Z$,
\[ \bn^*y_q=n^{g-p+q}y_q, \]
where $\bn$ denotes the multiplication by $n$ on $\hat{A}$.
\end{enumerate}
\end{proposition}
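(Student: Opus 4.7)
The plan is to transport each of the four statements through the ring isomorphism $\psi$ of Theorem~\ref{theorem:iso} and reduce everything to Beauville's classical results for the Chow ring, using that (\ref{eq:FCH}) makes $\sF^{CH}$ act coefficient-wise on monomials in $\mathbf{t}$. As a preliminary I would first establish the dual intertwining $\psi_A\circ\hat{\sF}^\Omega=\hat{\sF}^{CH}\circ\psi_{\hat{A}}$ by repeating verbatim the proof of the previous proposition, with $G(c_1^\Omega(\sP))$ replaced by its transpose $\iota^*G(c_1^\Omega(\sP))$, where $\iota:\hat A\times A\to A\times \hat A$ swaps the factors, and using that $\nu$ commutes with $\iota^*$.

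For parts (1)--(3), each classical identity of Beauville on $CH_\Q$ extends coefficient-wise to $CH_\Q[\mathbf{t}]$ via (\ref{eq:FCH}). For (1), Beauville's inversion $\hat{\sF}\circ\sF=(-1)^g\sigma_A^*$ on $CH_\Q(A)$ thus holds on $CH_\Q[\mathbf{t}](A)$; composing the two intertwining relations and cancelling $\psi_A$, using that $\sigma_A^*$ commutes with $\psi$ since $\sigma_A$ is a morphism of abelian varieties, gives the first identity, and the second is symmetric. For (2), the Pontrjagin product is defined through $\mu_*$, the projections $p_1,p_2$ and the cup product, so the ring isomorphism $\psi$, which also exists on $A\times A$ and commutes with pushforwards and pullbacks of morphisms of abelian varieties, intertwines the two Pontrjagin products: $\psi_A(x\star y)=\psi_A(x)\star\psi_A(y)$; Beauville's exchange formulas then transport back via $\psi$. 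For (3), the maps $f_*$, $f^*$, $\hat f_*$, $\hat f^*$ all commute with $\psi$, so Beauville's isogeny compatibility transports directly.

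For part (4), write $\psi_A(x)=\sum_j x_j\otimes\mathbf{t}^{I_j}$ with $x_j\in CH^{p+m_j}_\Q(A)$ and $|\mathbf{t}^{I_j}|=-m_j$, $m_j\geq 0$. By (\ref{eq:FCH}) one has $\sF^{CH}(\psi_A(x))=\sum_j\sF(x_j)\otimes\mathbf{t}^{I_j}$, and Beauville's eigenvalue theorem says that the degree-$r$ piece $\sF(x_j)_r\in CH^r_\Q(\hat A)$ satisfies $\bn^*\sF(x_j)_r=n^{g-(p+m_j)+r}\sF(x_j)_r$. The degree-$q$ homogeneous component of $\sF^{CH}(\psi_A(x))$ is then $\sum_j\sF(x_j)_{q+m_j}\otimes\mathbf{t}^{I_j}$, and each summand is an eigenvector of $\bn^*$ with the uniform eigenvalue $n^{g-p-m_j+q+m_j}=n^{g-p+q}$. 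Since $\psi_{\hat A}$ is grading-preserving and commutes with $\bn^*$ (as $\bn:\hat A\to\hat A$ is a morphism of abelian varieties), transporting back gives $\bn^*y_q=n^{g-p+q}y_q$.

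The most delicate step is the grading bookkeeping in (4): the grading on $CH_\Q[\mathbf{t}]$ mixes Chow codimension with the negative degrees of the $t_i$'s, so one must verify that Beauville's eigenvalue exponent $g-(p+m_j)+r$ collapses to the uniform exponent $g-p+q$ independently of the monomial $\mathbf{t}^{I_j}$. The remaining parts are then essentially routine bookkeeping once the two intertwining identities between $\sF^\Omega$ and $\sF^{CH}$ are in hand.
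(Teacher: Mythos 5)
Your proposal is correct and follows essentially the same route as the paper: transport all four statements through the isomorphism $\psi$ and reduce to Beauville's results on $CH_\Q$, extended coefficient-wise to $CH_\Q[\mathbf{t}]$ via \eqref{eq:FCH}. Your explicit verification of the dual intertwining relation and the grading bookkeeping in (4) (checking that the exponent $g-(p+m_j)+r$ collapses uniformly to $g-p+q$) are details the paper's proof leaves implicit, but the argument is the same.
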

\begin{proof}
For (1), we first check that $\hat{\sF}^{CH}\circ\sF^{CH} = (-1)^g(\sigma_A)^*_{CH}$. This follows readily from \eqref{eq:FCH} and \cite[Proposition~3]{Beauville1}. Thus,
\begin{align*}
\psi_A\circ\hat{\sF}^\Omega\circ\sF^\Omega &=\hat{\sF}^{CH}\circ\sF^{CH}\circ\psi_A \\
&= (-1)^g(\sigma_A)^*_{CH}\circ\psi_A=(-1)^g\psi_A\circ(\sigma_A)^*_\Omega.
\end{align*}
Since $\psi_A$ is an isomorphism, we get the desired result. The other part can be shown similarly.

To show (2), we check that $\psi_A(x\star y)=\psi_A(x)\star\psi_A(y)$. Indeed, $\psi_A$ is a ring homomorphism and commutes with pushforwards and with the pullbacks of maps between abelian varieties. As in the previous part, the result follows by applying $\psi_{\hat{A}}$ to both sides of the desired equality and noticing that the result holds for $\sF_{CH}$ by \cite[Proposition~3]{Beauville1}. (3) also follows similarly.

For (4), note that, if $x\in \Omega_\Q^p(A)$, then clearly, $\psi_{\hat{A}}\circ\sF^\Omega(x)=\sum_{q\leq g}\psi_{\hat{A}}(y_q)$, where $\psi_{\hat{A}}(y_q)\in CH_\Q[\bt]^q(\hat{A})$. Since $\psi_{\hat{A}}\circ\sF^\Omega(x)=\sF^{CH}(\psi_A(x))$, we get $\bn^*_{CH}\psi_{\hat{A}}(y_q)=n^{g-p+q}\psi_{\hat{A}}(y_q)$ by \cite[F3]{Beauville2} which implies
$\bn^*y_q=n^{g-p+q}y_q$ as required.
 
\end{proof}

\section{Beauville decomposition for algebraic cobordism}

We follow the ideas in \cite{Beauville2} to give a decomposition of $\Omega^*_\Q(A)$ into eigenspaces of $\bn^*$ using the Fourier-Mukai operator defined in \S~\ref{sec:definition}. 

For $s\in\Z$ and $A$ an abelian variety of dimension $g$ over $k$, let $\Omega^{\phantom{\Q}p}_{\Q s}(A)$ denote the sub-group
\[ \Omega^{\phantom{\Q}p}_{\Q s}(A):=\big\{ x\in \Omega^{p}_\Q(A) \vert\, \bn^*x=n^{2p-s}x \big\}. \]

Following the sketch of \cite[Proposition~1]{Beauville2} gives us the following.
\begin{proposition}\label{prop:decomp}
Let $x\in \Omega^{p}_\Q(A)$, and $m$ be any integer other than $0$, $1$ or $-1$. The following conditions  are equivalent:
\begin{enumerate}
\item $\sF^\Omega(x)\in \Omega^{g-p+s}_\Q(\hat{A})$;
\item $x\in \Omega^{\phantom{\Q}p}_{\Q s}(A)$;
\item $\bm^*x=m^{2p-s}x$;
\item $\bm_*x=m^{2g-2p+s}x$;
\item $\sF^\Omega(x)\in \Omega^{\phantom{\Q}g-p+s}_{\Q s}(\hat{A})$.
\end{enumerate}
\end{proposition}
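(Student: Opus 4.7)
The plan is to use condition~(1) as a central pivot and show that each of (2), (3), (4), (5) is equivalent to it. The two main tools are Proposition~\ref{prop:properties}(3), which converts $\bn^*$ on $A$ into $\bn_*$ on $\hat{A}$ across $\sF^\Omega$ (and vice versa, since multiplication by $n$ is self-dual on an abelian variety), and Proposition~\ref{prop:properties}(4), which computes the action of $\bn^*$ on each homogeneous piece of $\sF^\Omega(x)$. Throughout, I will freely use the elementary identity $\bn_*\bn^* = n^{2g}$, which follows from the projection formula together with the fact that $\bn$ is finite of degree $n^{2g}$.

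The equivalence (1)$\Leftrightarrow$(5) is essentially automatic: Proposition~\ref{prop:properties}(4) asserts that any piece of $\sF^\Omega(x)$ lying in $\Omega^{g-p+s}_\Q(\hat{A})$ is already an eigenvector of $\bn^*$ with eigenvalue $n^{g-p+(g-p+s)} = n^{2(g-p+s)-s}$, which is exactly what membership in $\Omega^{\phantom{\Q}g-p+s}_{\Q s}(\hat{A})$ requires. For (3)$\Leftrightarrow$(4), apply $\bm_*$ to $\bm^*x = m^{2p-s}x$ and use $\bm_*\bm^* = m^{2g}$ to read off $\bm_* x = m^{2g-2p+s}x$; since $m \neq 0$ this step is reversible. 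For the core implication (3)$\Rightarrow$(1), apply $\sF^\Omega$ to $\bm^*x = m^{2p-s}x$ and use Proposition~\ref{prop:properties}(3) to rewrite the left side as $\bm_*\sF^\Omega(x)$. Writing $\sF^\Omega(x) = \sum_q y_q$ and combining Proposition~\ref{prop:properties}(4) with $\bn_*\bn^* = n^{2g}$ gives $\bm_* y_q = m^{g+p-q}y_q$, so the equation becomes $\sum_q m^{g+p-q} y_q = m^{2p-s}\sum_q y_q$. The hypothesis $m \notin \{0,\pm 1\}$ ensures the powers $m^{g+p-q}$ are pairwise distinct in $q$, forcing $y_q = 0$ unless $q = g-p+s$; this is (1).

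For the reverse implication (1)$\Rightarrow$(2) (which is stronger than (1)$\Rightarrow$(3)), assume $\sF^\Omega(x)$ is concentrated in codimension $g-p+s$. Proposition~\ref{prop:properties}(4) combined with $\bn_*\bn^* = n^{2g}$ yields $\bn_*\sF^\Omega(x) = n^{2p-s}\sF^\Omega(x)$ for every integer $n \neq 0$. Rewriting the left side as $\sF^\Omega(\bn^*x)$ via Proposition~\ref{prop:properties}(3), we get $\sF^\Omega(\bn^*x - n^{2p-s}x) = 0$; applying $\hat{\sF}^\Omega$ and invoking Proposition~\ref{prop:properties}(1), which provides the isomorphism $\hat{\sF}^\Omega\circ\sF^\Omega = (-1)^g\sigma_A^*$, we conclude $\bn^*x = n^{2p-s}x$ for every such $n$. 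Since (2)$\Rightarrow$(3) is trivial, the cycle closes.

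The main obstacle is the index bookkeeping in the (3)$\Rightarrow$(1) step: one has to carry through the exponent shifts produced by Proposition~\ref{prop:properties}(4) and by $\bn_*\bn^* = n^{2g}$ simultaneously, identify the codimension $q = g-p+s$ that is singled out by the eigenvalue equation, and verify that the hypothesis $m \notin \{0,\pm 1\}$ is precisely what separates the distinct values of $m^{g+p-q}$ indexed by~$q$. Once this is aligned, everything else reduces to routine applications of Proposition~\ref{prop:properties}.
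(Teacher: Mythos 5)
Most of your argument is sound and uses the same toolkit as the paper (Proposition~\ref{prop:properties} parts (1), (3), (4) together with $\bm_*\bm^*=m^{2g}$), but one step fails as written: the claim that (3)$\Leftrightarrow$(4) because applying $\bm_*$ ``is reversible'' since $m\neq 0$. Applying $\bm_*$ to $\bm^*x=m^{2p-s}x$ and using $\bm_*\bm^*=m^{2g}$ does give (3)$\Rightarrow$(4), but that operation cannot be undone: $\bm_*$ is not injective a priori, and the composite in the other order, $\bm^*\bm_*$, is \emph{not} the scalar $m^{2g}$ by any elementary argument (by base change it is $\sum_{a\in A[m]}t_a^*$, a sum of translations by $m$-torsion points, and the assertion that this acts as $m^{2g}\cdot\id$ on $\Omega^*_\Q$ is essentially equivalent to the decomposition you are trying to prove). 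In your hub-and-spoke scheme the only arrow leaving (4) is this unjustified reversal, so condition (4) is left dangling: you establish (1)$\Leftrightarrow$(2)$\Leftrightarrow$(3)$\Leftrightarrow$(5) and (3)$\Rightarrow$(4), but never that (4) implies any of the others.

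The repair is exactly the paper's (4)$\Rightarrow$(5) step, and it is the mirror image of your own (3)$\Rightarrow$(1) argument: from $\bm_*x=m^{2g-2p+s}x$, apply $\sF^\Omega$ and Proposition~\ref{prop:properties}(3) to get $\bm^*\sF^\Omega(x)=m^{2g-2p+s}\sF^\Omega(x)$ on $\hat{A}$, then write $\sF^\Omega(x)=\sum_q y_q$ and use part (4) of that proposition with $m\neq 0,\pm 1$ to see that only the component with $g-p+q=2g-2p+s$, i.e.\ $q=g-p+s$, can be nonzero; that is condition (1). A further small point: your route to (1)$\Rightarrow$(2) passes through $\bn_*\bn^*=n^{2g}$ and therefore only yields $\bn^*x=n^{2p-s}x$ for $n\neq 0$, whereas the paper applies Proposition~\ref{prop:properties}(4) directly to the homogeneous components of $\hat{\sF}^\Omega$ of $\sF^\Omega(x)$ and obtains the eigenvalue equation for every $n$ without that detour. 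Your index bookkeeping in (3)$\Rightarrow$(1) and (1)$\Leftrightarrow$(5) is otherwise correct and agrees with the paper's.
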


\begin{proof}
\begin{description}
\item[(1)$\Rightarrow$(2)] Let $y=(-1)^g(\sigma_{\hat{A}})^*\sF^\Omega(x)$ and let $\hat{\sF}^\Omega(y)=\disp\sum_{q\leq g}x_q$ with $x_q\in \Omega^{q}_\Q(A)$. Then, Proposition~\ref{prop:properties}, part (4) gives us
\[ \bn^*x_q=n^{g-(g-p+s)+q}x_q=n^{p+q-s}x_q. \]
But now, \begin{align*}
\hat{\sF}^\Omega(y)&=(-1)^g\hat{\sF}^\Omega\circ(\sigma_{\hat{A}})^*\circ\sF^\Omega(x)\\
&=(-1)^g(\sigma_{A})^*\circ\hat{\sF}^\Omega\circ\sF^\Omega(x)\quad[\text{By Proposition~\ref{prop:properties}, part (3)}]\\
&=x,\quad[\text{By Proposition~\ref{prop:properties}, part (1)}]
\end{align*}
Then, $x=x_p$ and $\bn^*x=n^{2p-s}x$,
thus showing that $x\in \Omega^{\phantom{\Q}p}_{\Q s}(A)$.
\item[(2)$\Rightarrow$(3)] This is by definition.
\item[(3)$\Rightarrow$(4)] Since $\bm$ is a surjective endomorphism of $A$ with finite kernel of size $m^{2g}$, we have $\bm_*^{CH}\bm^*_{CH}=m^{2g}\cdot \id_{CH}$. Then, $\psi_A(\bm_*\bm^*x)=\bm_*^{CH}\bm^*_{CH}\psi_A(x)=m^{2g}\psi_A(x)$ implies $\psi_A(\bm_*x)=\psi_A(m^{2g-2p+s}x)$ which gives the result by the injectivity of $\psi_A$ since $m\neq 0,\pm 1$.
\item[(4)$\Rightarrow$(5)] By Proposition~\ref{prop:properties}, part (3), we get that 
\[ \bm^*\sF^\Omega(x) = \sF^\Omega(\bm_*x) = m^{2g-2p+s}\sF^\Omega(x) =m^{g-p+(g-p+s)}\sF^\Omega(x).\]
Since $m\neq 0,\pm 1$, this implies by Proposition~\ref{prop:properties}, part (4), that $\sF^\Omega(x)\in \Omega^{g-p+s}_\Q(\hat{A})$, but then, by definition, $\sF^\Omega(x)\in \Omega^{\phantom{\Q}g-p+s}_{\Q s}(\hat{A})$.
\item[(5)$\Rightarrow$(1)] This is obvious.
\end{description}
\end{proof}

\begin{theorem}\label{theorem:Beauville} Let $A$ be an abelian variety of dimension $g$ over $k$. Then, we have
\[ \Omega_\Q^{p}(A) = \bigoplus_{s=2p-2g}^{\mathrm{Min}(2p,p)} \Omega^{\phantom{\Q}p}_{\Q s}(A). \]
\end{theorem}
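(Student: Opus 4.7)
The plan is to transfer the problem through the ring isomorphism $\psi_A\colon\Omega^*_\Q(A)\iso CH_\Q[\bt]^*(A)$ of Theorem~\ref{theorem:iso} and reduce it to the Chow analogue. Since $\psi_A$ commutes with pullbacks along morphisms between abelian varieties (a fact used in the proof of Proposition~\ref{prop:properties}), the action of $\bn^*$ on $\Omega^p_\Q(A)$ corresponds under $\psi_A$ to the action of $\bn^*$ on $CH_\Q[\bt]^p(A)$, so it suffices to produce the analogous eigenspace decomposition of the latter.

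First, write
$$CH_\Q[\bt]^p(A)=\bigoplus_{I}CH^{p+|I|}_\Q(A)\cdot\bt^I,$$
where $I=(n_1,n_2,\ldots)$ runs over multi-indices of nonnegative integers with almost all entries zero and $|I|:=\sum_j j\,n_j$; the summand vanishes unless $0\le p+|I|\le g$, and on any given element the sum is finite. Because the generators $t_j$ are pulled back from $\Spec k$, $\bn^*$ fixes $\bt^I$, so $\bn^*(x_I\bt^I)=(\bn^*x_I)\bt^I$. Applying Beauville's decomposition from \cite{Beauville2} to each Chow group $CH^{p+|I|}_\Q(A)$ then yields pieces $x_I\in CH^{p+|I|}_{\Q,s'}(A)$ on which $\bn^*$ acts by $n^{2(p+|I|)-s'}$; setting $s:=s'-2|I|$, the class $x_I\bt^I$ lies in the $n^{2p-s}$-eigenspace of $\bn^*$. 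Regrouping the resulting double direct sum by the common value of $s$ produces the desired decomposition of $CH_\Q[\bt]^p(A)$ into $\bn^*$-eigenspaces.

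It remains to identify the range of $s$. An $(I,s')$-piece contributes nontrivially only when $0\le p+|I|\le g$ together with the Beauville range $2(p+|I|)-2g\le s'\le p+|I|$; under $s'=s+2|I|$ these inequalities become $2p-2g\le s\le p-|I|$. The lower bound $2p-2g$ is uniform in $I$, while the upper bound $p-|I|$ is maximized by the smallest admissible $|I|=\max(0,-p)$, giving an overall upper bound of $p$ when $p\ge0$ and of $2p$ when $p<0$, i.e.\ $\min(2p,p)$ in either case. Transporting the decomposition back through $\psi_A^{-1}$ yields the stated equality, with $\Omega^{\phantom{\Q}p}_{\Q s}(A)$ matching the $n^{2p-s}$-eigenspace on the Chow side.

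The substantive inputs---Beauville's decomposition on $CH_\Q$ and the compatibility of $\psi_A$ with $\bn^*$---are already available, so the real work is purely combinatorial. The main subtlety to watch, and the step where I would take most care, is how the $\bt$-grading shifts Beauville's eigenvalues by $2|I|$; handling this shift correctly is precisely what produces the asserted range $[2p-2g,\min(2p,p)]$ for $s$.
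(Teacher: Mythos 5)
Your proposal is correct, but it takes a genuinely different route from the paper. The paper runs Beauville's original argument directly at the level of $\Omega_\Q$: it decomposes $y=\sF^\Omega(x)$ by codimension, uses Proposition~\ref{prop:properties}(4) and Proposition~\ref{prop:decomp} to see that each $\hat{\sF}^\Omega(y_q)$ lands in $\Omega^{\phantom{\Q}p}_{\Q\, p+q-g}(A)$, recovers $x$ via $\hat{\sF}^\Omega\circ\sF^\Omega=(-1)^g\sigma_A^*$, and only at the very end passes through $\psi$ to bound the range of $s$ by the vanishing of $(c_1^{CH}(\sP))^j$ for $j>2g$ or $j<0$. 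You instead bypass $\sF^\Omega$ entirely, reducing the whole statement to Beauville's classical theorem via the graded ring isomorphism $\psi_A$, the $\Z[\bt]$-linearity of $\bn^*$ on $CH_\Q[\bt]^*(A)=CH_\Q^*(A)\otimes\Z[\bt]$, and a regrading bookkeeping $s=s'-2|I|$; your range computation checks out and reproduces $[2p-2g,\min(2p,p)]$ exactly (note that you quote Beauville's lower bound in the non-sharp form $s'\ge 2(p+|I|)-2g$ rather than $s'\ge (p+|I|)-g$, but since $p+|I|\le g$ the former is implied by the latter and the union over $I$ yields the same interval either way). Your argument is shorter and more elementary, needing only $\psi_A$ and its compatibility with $\bn^*$; the paper's argument keeps the cobordism Fourier--Mukai formalism in the foreground, which is in line with its overall aims and additionally exhibits the $s$-component of $x$ explicitly as $(-1)^g\sigma_A^*\hat{\sF}^\Omega(y_{s+g-p})$, a description that meshes with Proposition~\ref{prop:decomp} and the subsequent applications.
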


\begin{proof}
Let $x\in \Omega_\Q^{p}(A)$ and let $y=\sF^\Omega(x)$. We can write $y=\sum_{q\leq g}y_q$, with $y_q\in \Omega_\Q^{q}(\hat{A})$. By Proposition~\ref{prop:properties}, part (4), $\bn^*y_q = n^{g-p+q}y_q$. That is, $y_q\in \Omega^{\phantom{\Q}q}_{\Q p+q-g}(\hat{A})$. Then, Proposition~\ref{prop:decomp} gives us that $\hat{\sF}^\Omega(y_q)\in \Omega^{\phantom{\Q}p}_{\Q p+q-g}(A)$. But, \[(-1)^g(\sigma_{A})^*x=\hat{\sF}^\Omega(y)=
\sum_{q\leq g}\hat{\sF}^\Omega(y_q).\] 
Putting $x_s=(-1)^g(\sigma_{A})^*\hat{\sF}^\Omega(y_{s+g-p})$, we obtain $x=\sum_{s\leq p}x_s$. We now improve the limits of the sum. 

Since $\hat{\sF}(y_q)$ has degree $p$, and by definition, $\psi_A\circ\hat{\sF}(y_q)=\disp\sum_{i\geq 0} p_{1*}\big(p_2^*(\psi_{\hat{A}}(y_q))\dfrac{(c_1^{CH}(\sP))^i}{i!}\big)$, we get 
\[ \psi_A\circ\hat{\sF}(y_q)=p_{1*}\big(p_2^*(\psi_{\hat{A}}(y_q))\dfrac{(c_1^{CH}(\sP))^{p+g-q}}{(p+g-q)!}\big). \]
Also, since $y_q$ has degree $q$, we get
\[ \psi_{\hat{A}}(y_q)=p_{2*}\big(p_1^*(\psi_A(x))\dfrac{(c_1^{CH}(\sP))^{q+g-p}}{(q+g-p)!}\big). \]
Since $(c_1^{CH}(\sP))^{p+g-q}=0$ if $q>p+g$ and $(c_1^{CH}(\sP))^{q+g-p}=0$ if $q<p-g$, we get that $x_s=0$ if $s>2p$ or $s<2p-2g$.
\end{proof}

As an easy consequence of the definition of $\Omega^{\phantom{\Q}p}_{\Q s}(A)$ and Proposition~\ref{prop:decomp}, we get the following:
\begin{proposition}
\begin{enumerate}
\item $\sF^\Omega(\Omega^{\phantom{\Q}p}_{\Q s}(A))=\Omega^{\phantom{\Q}g-p+s}_{\Q s}(\hat{A})$.
\item If $f:A\to B$ is a homomorphism of abelian varieties of relative dimension $m$, then $f^*\Omega^{\phantom{\Q}p}_{\Q s}(B)\subset\Omega^{\phantom{\Q}p}_{\Q s}(A)$ and $f_*\Omega^{\phantom{\Q}p}_{\Q s}(A)\subset\Omega^{\phantom{\Q}p+m}_{\Q s}(B)$.
\item If $x\in\Omega^{\phantom{\Q}p}_{\Q s}(A),\,y\in\Omega^{\phantom{\Q}q}_{\Q t}(A)$, then $xy\in\Omega^{\phantom{\Q}p+q}_{\Q s+t}(A)$ and 
$x\star y\in\Omega^{\phantom{\Q}p+q-g}_{\Q s+t}(A)$, where $\star$ denotes the Pontryagin product on $A$.
\end{enumerate}
\end{proposition}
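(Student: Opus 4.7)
The plan is to derive all three parts as formal consequences of Proposition~\ref{prop:decomp} and Proposition~\ref{prop:properties}, each reducing to routine eigenvalue bookkeeping under multiplication by $n$.

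For (1), the inclusion $\sF^\Omega(\Omega^{\phantom{\Q}p}_{\Q s}(A)) \subseteq \Omega^{\phantom{\Q}g-p+s}_{\Q s}(\hat{A})$ is exactly the implication $(2)\Rightarrow(5)$ of Proposition~\ref{prop:decomp}. For the reverse inclusion, given $y \in \Omega^{\phantom{\Q}g-p+s}_{\Q s}(\hat{A})$, I would set $x := (-1)^g \sigma_A^*\, \hat{\sF}^\Omega(y)$; using Proposition~\ref{prop:properties}(1) together with Proposition~\ref{prop:properties}(3) applied to the isogeny $\sigma_A$ (whose dual is $\sigma_{\hat{A}}$, and for which pushforward equals pullback since $\sigma$ is an involution), a short computation gives $\sF^\Omega(x) = y$. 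The analog of Proposition~\ref{prop:decomp} applied to $\hat{A}$ with $\hat{\sF}^\Omega$ in place of $\sF^\Omega$ places $\hat{\sF}^\Omega(y)$ in cohomological degree $p$, so $x \in \Omega^{p}_\Q(A)$; then $(1)\Rightarrow(2)$ of Proposition~\ref{prop:decomp} identifies $x$ with an element of $\Omega^{\phantom{\Q}p}_{\Q s}(A)$, completing the equality.

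For (2), the starting point is the commutativity $f \circ \bn_A = \bn_B \circ f$ that holds for any homomorphism $f: A \to B$. Pulling back gives $\bn_A^* \circ f^* = f^* \circ \bn_B^*$, which yields the pullback claim directly from the definition of $\Omega^{\phantom{\Q}p}_{\Q s}(B)$. Pushing forward gives $\bn_{B*} \circ f_* = f_* \circ \bn_{A*}$; combined with the equivalence $(2)\Leftrightarrow(4)$ of Proposition~\ref{prop:decomp} (which recasts membership in $\Omega^{\phantom{\Q}p}_{\Q s}$ as a $\bn_*$-eigenvalue condition) and a direct exponent comparison using $m=\dim B - \dim A$, this yields the pushforward claim.

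For (3), the product case is immediate because $\bn^*$ is a ring homomorphism, so $\bn^*(xy) = (\bn^* x)(\bn^* y) = n^{2(p+q)-(s+t)}xy$. For the Pontryagin product, the identity $\bn \circ \mu = \mu \circ (\bn \times \bn)$, combined with the external-product compatibility $(\bn \times \bn)_*(p_1^* x \cdot p_2^* y) = p_1^*(\bn_* x) \cdot p_2^*(\bn_* y)$, yields $\bn_*(x \star y) = (\bn_* x) \star (\bn_* y)$. Using criterion $(4)$ of Proposition~\ref{prop:decomp}, the eigenvalues multiply to $n^{4g-2(p+q)+(s+t)} = n^{2g - 2(p+q-g) + (s+t)}$, placing $x \star y$ in $\Omega^{\phantom{\Q}p+q-g}_{\Q s+t}(A)$. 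The only nontrivial input in all of the above is the external-product compatibility, which is a standard consequence of the projection formula and base change for transverse squares in $\Omega^*$ from \cite{LM} and should present no difficulty.
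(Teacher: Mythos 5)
Your argument is correct, and for parts (1) and (2) it follows essentially the paper's route: (1) is the equivalence $(2)\Leftrightarrow(5)$ of Proposition~\ref{prop:decomp} plus the inversion formula of Proposition~\ref{prop:properties}(1) (the paper simply declares this ``immediate''; your explicit construction of the preimage $x=(-1)^g\sigma_A^*\hat{\sF}^\Omega(y)$ is the right way to make the surjectivity precise), and (2) is exactly the commutation $f\circ\bn=\bn\circ f$ combined with the $(3)\Leftrightarrow(4)$ equivalence. Where you genuinely diverge is the Pontryagin half of (3): the paper stays inside the Fourier formalism, writing $\sF^\Omega(x\star y)=\sF^\Omega(x)\sF^\Omega(y)$, locating this product via part (1) and the ordinary-product rule, and then applying $\hat{\sF}^\Omega$ and part (2) (for $\sigma_A$) to conclude; you instead prove $\bn_*(x\star y)=(\bn_*x)\star(\bn_*y)$ directly from $\bn\circ\mu=\mu\circ(\bn\times\bn)$ and the external-product compatibility $(\bn\times\bn)_*(p_1^*x\cdot p_2^*y)=p_1^*(\bn_*x)\cdot p_2^*(\bn_*y)$, then read off the eigenvalue via criterion (4). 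Your route is more elementary in that it never invokes $\sF^\Omega$ for this step, at the cost of needing the base-change identity $(\bn\times\id)_*p_1^*=p_1^*\bn_*$ for the transverse Cartesian square, which does hold in $\Omega^*$ (the square is Tor-independent with $p_1$ smooth), so there is no gap; the paper's route trades that input for the already-established multiplicativity of $\sF^\Omega$ under $\star$. Two small points worth making explicit if you write this up: in (2) your reading $m=\dim B-\dim A$ is forced by the degree shift of $f_*$ and is what makes the exponent $2\dim B-2(p+m)+s=2\dim A-2p+s$ come out right, and in (3) you should record that $x\star y$ lands in cohomological degree $p+q-g$ (since $\mu_*$ shifts codimension by $-g$) before invoking criterion (4).
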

\begin{proof}
(1) is immediate from Proposition~\ref{prop:decomp}. 

(2) follows from the fact that $f\circ\bm=\bm\circ f$ and the equivalence of (3) and (4) in Proposition~\ref{prop:decomp}.

If $x\in\Omega^{\phantom{\Q}p}_{\Q s}(A),\,y\in\Omega^{\phantom{\Q}q}_{\Q t}(A)$, then $xy\in\Omega^{\phantom{\Q}p+q}_{\Q s+t}(A)$ by definition. Also, note that $\sF^\Omega(x\star y)=\sF^\Omega(x)\sF^\Omega(y)\in\Omega^{\phantom{\Q}2g-p-q+s+t}_{\Q s+t}(\hat{A})$. Applying $\hat{\sF}^\Omega$, we get $(\sigma_A)^*(x\star y) \in \Omega^{\phantom{\Q}p+q-g}_{\Q s+t}(A)$, which gives the result by part (2).
\end{proof}

\section{Consequences for numerically trivial cobordism cycles}
\label{sec:num}
Let $I\subset CH^g(A)$ denote the set of 0-cycles of degree 0 on $A$. In \cite[\S~4]{Bloch}, Bloch showed that
\begin{equation}\label{eq:Bloch}
I^{\star (r+1)}\star CH^r(A)=(0)
\end{equation}
in the cases $r=0,1,g-2,g-1,g$ where $g=\dim(A)$. In \cite{Beauville1}, Beauville conjectured that 
\[
\tag{$F_p$}\label{Fp}\text{ For all }x\in CH^p_\Q(A),\text{ we have }\sF(x)\in CH^{\geqslant g-p}_\Q(\hat{A}).
\]
He verified \eqref{Fp} for $p=0,1,g-2,g-1,g$ (\cite[Proposition~8.(i)]{Beauville1}) and also showed that  
\begin{proposition}[{\cite[Proposition~9]{Beauville1}}]\label{prop:Beauville}
\eqref{Fp} implies that $I^{\star (p+1)}\star CH^p(A)=(0)$. In particular, the groups $I^{\star (g+1)}$, $I^{\star g}\star CH^{g-1}(A)$ and $I^{\star (g-1)}\star CH^{g-2}(A)$ are zero.
\end{proposition}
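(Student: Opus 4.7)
The plan is to apply the Chow Fourier transform $\sF$ to the subgroup $I^{\star (p+1)}\star CH^p(A)\subset CH_\Q^g(A)$ and show that its image in $CH_\Q^*(\hat{A})$ is forced into codimension strictly greater than $g$, hence vanishes. Since $\sF$ is invertible up to $(\sigma_A)^*$ and a sign of $(-1)^g$ (\cite[Proposition~3]{Beauville1}; cf.\ the cobordism analogue in Proposition~\ref{prop:properties}(1)), this vanishing descends to the desired equality for the original Pontryagin product.

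The key preliminary observation is that $\sF(I)\subset CH^{\geq 1}_\Q(\hat{A})$. Indeed, from $\sF(x)=p_{\hat{A}*}(p_A^*x\cdot ch(\sP))$ and the fact that the codimension-zero part of $ch(\sP)$ is $1_{A\times\hat{A}}$, the codimension-zero component of $\sF(x)$ for $x\in CH^g_\Q(A)$ equals $p_{\hat{A}*}p_A^*(x)=\deg(x)\cdot 1_{\hat{A}}$ by the projection formula and the fact that $p_{\hat{A}*}$ computes the degree on the fibre. This vanishes precisely when $x\in I$.

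The main step then combines this with the fact that $\sF$ exchanges the Pontryagin and intersection products, $\sF(a\star b)=\sF(a)\cdot\sF(b)$ (\cite[Proposition~3]{Beauville1}; Chow analogue of Proposition~\ref{prop:properties}(2)), together with hypothesis \eqref{Fp}:
\begin{align*}
\sF\big(I^{\star(p+1)}\star CH^p(A)\big) &= \sF(I)^{p+1}\cdot\sF(CH^p(A)) \\
&\subset CH^{\geq p+1}_\Q(\hat{A})\cdot CH^{\geq g-p}_\Q(\hat{A}) \\
&\subset CH^{\geq g+1}_\Q(\hat{A})=(0).
\end{align*}
Injectivity of $\sF$ then yields $I^{\star(p+1)}\star CH^p(A)=(0)$.

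For the ``in particular'' assertions, one invokes Beauville's verification of \eqref{Fp} for $p\in\{g-2,g-1,g\}$ in \cite[Proposition~8.(i)]{Beauville1}. The case $p=g$ gives $I^{\star(g+1)}\star CH^g(A)=(0)$, and since the origin $[0_A]\in CH^g(A)$ is the unit for $\star$, one has $I^{\star(g+1)}=I^{\star(g+1)}\star[0_A]\subset I^{\star(g+1)}\star CH^g(A)=(0)$. The one delicate point in the whole argument is the sharp bound $\sF(I)\subset CH^{\geq 1}_\Q$ (rather than the tautological $CH^{\geq 0}_\Q$): without this extra unit of codimension, $\sF(I)^{p+1}\cdot\sF(CH^p(A))$ would only be forced into $CH^{\geq g}_\Q(\hat{A})$, which does not vanish.
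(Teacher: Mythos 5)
Your proof is correct and is essentially Beauville's original argument, which the paper only cites rather than reproves; it is also exactly the strategy the paper adapts to cobordism in Proposition~\ref{prop:FpOmega}, where the sharp bound $\sF^\Omega(\alpha)\in\bL^{<0}\cdot\Omega^{\geqslant 1}(\hat{A})$ for numerically trivial $\alpha$ plays the role of your key observation that $\sF(I)\subset CH^{\geqslant 1}_\Q(\hat{A})$ (obtained there via the generalized degree formula rather than your direct computation of the codimension-zero component as $\deg(x)\cdot 1_{\hat{A}}$). Your use of multiplicativity under $\star$, the hypothesis \eqref{Fp}, vanishing in codimension $>g$, and invertibility of $\sF$ up to $(-1)^g(\sigma_A)^*$ is sound throughout.
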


We prove an analogue of this Proposition replacing $I$ with numerically trivial cobordism cycles. A notion of numerical equivalence on $\Omega^*(X)$ was defined in \cite{BP}. We briefly recall the construction.
\begin{definition}\label{def:num}
Let $X$ be a smooth projective scheme over a field $k$ of characteristic 0. Consider the composition of maps 
\[
\Omega^m(X)\otimes\Omega^n(X)\to\Omega^{m+n}(X)\by{\pi_*}\Omega^{m+n-\dim_X}(k),
\]  
where  $\pi$ is the structure morphism $X\to\Spec(k)$.
This gives a map of $\bL$-modules $\Omega^*(X)\longrightarrow \Hom_{\bL}(\Omega^*(X),\Omega^*(k))$. 
We say that a cobordism cycle in $\Omega^*(X)$ is {\em numerically equivalent to} 0 if it is in the kernel of this map:
\[ \sN^*(X):=\ker\Big(\Omega^*(X)\rightarrow \Hom_{\bL}(\Omega^*(X),\Omega^*(k))\Big) \] and \[\Omega^*_{\n}(X):=\Omega^*(X)/\sN^*(X). \]
\end{definition}

Let $A$ be an abelian variety over $k$ of dimension $g$. 
\begin{lemma}\label{lemma:num}
$\sF^\Omega$ carries $\sN^*(A)$ to $\sN^*(\hat{A})$.
\end{lemma}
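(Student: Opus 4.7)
The plan is to show the lemma by a direct adjunction-style computation with the projection formula, reducing the statement to the defining vanishing property of $\sN^*(A)$. Concretely, to check that $\sF^\Omega(x)\in\sN^*(\hat A)$, it suffices to verify that $\pi_{\hat A *}(\sF^\Omega(x)\cdot z)=0$ for every $z\in\Omega^*_\Q(\hat A)$, where $\pi_{\hat A}:\hat A\to\Spec k$ is the structure morphism. The aim is to move $z$ across the Fourier--Mukai pushforward and recognize the inner expression as $\hat\sF^\Omega(z)\in\Omega^*_\Q(A)$, at which point the claim becomes the hypothesis on $x$.

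Let $p_A, p_{\hat A}$ denote the projections from $A\times\hat A$. Using $\sF^\Omega(x)=p_{\hat A*}\bigl(p_A^*x\cdot G(c_1^\Omega(\sP))\bigr)$ and applying the projection formula along $p_{\hat A}$ followed by the functoriality $\pi_{\hat A}\circ p_{\hat A}=\pi_A\circ p_A=\pi_{A\times\hat A}$, I compute
\[
\pi_{\hat A*}\bigl(\sF^\Omega(x)\cdot z\bigr)
=\pi_{A\times\hat A*}\bigl(p_A^*x\cdot G(c_1^\Omega(\sP))\cdot p_{\hat A}^*z\bigr).
\]
Now applying the projection formula along $p_A$ to pull $p_A^*x$ out yields
\[
\pi_{\hat A*}\bigl(\sF^\Omega(x)\cdot z\bigr)
=\pi_{A*}\bigl(x\cdot p_{A*}(G(c_1^\Omega(\sP))\cdot p_{\hat A}^*z)\bigr)
=\pi_{A*}\bigl(x\cdot\hat\sF^\Omega(z)\bigr),
\]
which vanishes since $\hat\sF^\Omega(z)\in\Omega^*_\Q(A)$ and $x\in\sN^*(A)$.

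The only nuance is that $\sN^*$ as defined in \cite{BP} lives inside $\Omega^*$, whereas $\sF^\Omega$ is only defined on $\Omega^*_\Q$; this is harmless since one works throughout with $\sN^*_\Q(A)\subset\Omega^*_\Q(A)$, characterized by vanishing of $\pi_{A*}(x\cdot y)$ for every $y\in\Omega^*_\Q(A)$, and the computation above uses only this property. There is no substantial obstacle here: the argument is essentially a double projection-formula manipulation, and the statement is really the assertion that $\sF^\Omega$ and $\hat\sF^\Omega$ are adjoint with respect to the pairing used to define numerical equivalence.
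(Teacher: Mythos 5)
Your proof is correct and follows essentially the same route as the paper: both arguments use the projection formula twice (along $p_{\hat A}$ and then along $p_A$, via $\pi_{\hat A}\circ p_{\hat A}=\pi_A\circ p_A$) to show that $\pi_{\hat A*}(\sF^\Omega(x)\cdot z)=\pi_{A*}(x\cdot\hat\sF^\Omega(z))$, and then conclude from the defining property of $\sN^*(A)$. The remark that this is really the adjointness of $\sF^\Omega$ and $\hat\sF^\Omega$ with respect to the numerical pairing is exactly the content of the paper's computation.
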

\begin{proof}
Let $\alpha\in \sN^*(A)$. Then, by definition, $\sF^\Omega(\alpha)=p_{\hat{A}*}(p_A^*\alpha\cdot G(c^\Omega_1(\sP)))$. Let $\pi_{\hat{A}}$ and $\pi_A$ be the structure morphisms of $\hat{A}$ and $A$ respectively and let $p_{\hat{A}}$ and $p_A$ be the respective projections of $A\times\hat{A}$ to $\hat{A}$ and $A$. We obtain, by the projection formula, for any $\gamma\in\Omega^*_\Q(\hat{A})$,
\begin{align*}
\pi_{\hat{A}*}(\sF^\Omega(\alpha)\cdot\gamma)&=\pi_{\hat{A}*}p_{\hat{A}*}\left(p_A^*\alpha\cdot G(c^\Omega_1(\sP))\cdot p_{\hat{A}}^*\gamma\right)\\
&=\pi_{A*}p_{A*}\left(p_A^*\alpha\cdot G(c^\Omega_1(\sP))\cdot p_{\hat{A}}^*\gamma\right)\\
&=\pi_{A*}\left(\alpha\cdot\hat{\sF}^\Omega(\gamma)\right).
\end{align*}
Thus, numerically triviality of $\alpha$ implies that $\sF^\Omega(\alpha)$ is numerically trivial.
\end{proof}
\begin{proposition}\label{prop:FpOmega}
Fix $0\leq p\leq g$. If $x\in\Omega^p_{\Q}(A)$ is such that $\sF^\Omega(x)\in \Omega^{\geqslant g-p}_{\Q}(A)$. Then, $\sN^{*\star (p+1)}_\Q\star x=0$. 
\end{proposition}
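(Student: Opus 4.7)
The plan is, for arbitrary $\alpha_1,\ldots,\alpha_{p+1}\in\sN^*_\Q(A)$, to prove $\alpha_1\star\cdots\star\alpha_{p+1}\star x=0$, which gives the proposition by multilinearity. I would first apply $\sF^\Omega$ and iterate Proposition~\ref{prop:properties}(2) to obtain
\[ \sF^\Omega(\alpha_1\star\cdots\star\alpha_{p+1}\star x)=\sF^\Omega(\alpha_1)\cdots\sF^\Omega(\alpha_{p+1})\cdot\sF^\Omega(x), \]
so that, by the invertibility of $\sF^\Omega$ guaranteed by Proposition~\ref{prop:properties}(1), it suffices to show that the right-hand side vanishes in $\Omega^*_\Q(\hat A)$.

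The ensuing degree count is cleanest in $CH_\Q[\bt]^*(\hat A)$, to which I would pass via the ring isomorphism $\psi_{\hat A}$ of Theorem~\ref{theorem:iso}. Writing $\psi_A(\alpha_i)=\sum_J\alpha_{i,J}\bt^J$ and $\psi_A(x)=\sum_J x_J\bt^J$, one first checks that an element $\sum_I\gamma_I\bt^I\in CH_\Q[\bt]^*(A)$ is numerically trivial if and only if each $\gamma_I$ is numerically trivial in $CH_\Q(A)$, since $\pi_*$ is $\bt$-linear and the $\bt^K$ are linearly independent. Together with the Chow analogue of Lemma~\ref{lemma:num}, this forces each $\sF^{CH}(\alpha_{i,J})$ to be a numerically trivial Chow cycle on $\hat A$, which moreover lies in $CH^{\geqslant 1}_\Q(\hat A)$ because $CH^0_\Q(\hat A)=\Q\cdot 1_{\hat A}$ and $1_{\hat A}$ is not numerically trivial (it pairs with the class of a closed point to give $1$). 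Similarly, using the weight $-|J|$ of $\bt^J$ (with $|J|:=\sum_i iJ_i$) and the linear independence of the $\bt^J$, the hypothesis $\sF^\Omega(x)\in\Omega^{\geqslant g-p}_\Q(\hat A)$ translates into the component-wise bound $\sF^{CH}(x_J)\in CH^{\geqslant g-p+|J|}_\Q(\hat A)$ for every $J$.

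Expanding $\psi_{\hat A}\bigl(\sF^\Omega(\alpha_1)\cdots\sF^\Omega(\alpha_{p+1})\sF^\Omega(x)\bigr)$ as a polynomial in $\bt$, each resulting Chow coefficient will be a finite sum of products of the form $\sF^{CH}(\alpha_{1,J_1})\cdots\sF^{CH}(\alpha_{p+1,J_{p+1}})\cdot\sF^{CH}(x_J)$, which by the preceding bounds lies in
\[ CH^{\geqslant(p+1)+(g-p+|J|)}_\Q(\hat A)=CH^{\geqslant g+1+|J|}_\Q(\hat A)=0, \]
since Chow groups vanish above the dimension of $\hat A$. Thus the expansion is identically zero and the argument is complete. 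The hard part is the careful bookkeeping that translates the single hypothesis $\sF^\Omega(x)\in\Omega^{\geqslant g-p}$ into the correctly $|J|$-shifted Chow bound on each coefficient $\sF^{CH}(x_J)$; once this is in hand, the final step reduces to the elementary fact that $\sN^0_\Q=0$ in Chow, which is precisely what lets $p+1$ numerically trivial factors push the total Chow codimension past $g$ in every $\bt$-component.
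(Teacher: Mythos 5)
Your argument is correct, and its skeleton coincides with the paper's: apply $\sF^\Omega$, use the multiplicativity of Proposition~\ref{prop:properties}(2) to reduce to the vanishing of $\sF^\Omega(\alpha_1)\cdots\sF^\Omega(\alpha_{p+1})\cdot\sF^\Omega(x)$, kill that product by a codimension count, and invert via Proposition~\ref{prop:properties}(1). Where you genuinely diverge is in the execution of the codimension count. The paper stays inside $\Omega^*_\Q$: it applies the generalized degree formula of Levine--Morel to each $\sF^\Omega(\alpha_i)$, invokes Lemma~\ref{lemma:num} together with \cite[Proposition~3.4]{BP} to kill the degree term, and concludes that each factor lies in $\Omega^{\geqslant 1}(\hat{A})$ up to $\bL^{<0}$-multiples, so that the $(p+2)$-fold product lands in (an $\bL^{<0}$-multiple of) $\Omega^{\geqslant g+1}(\hat{A})=0$. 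You instead transport everything through $\psi$ into $CH_\Q[\bt]^*$ and do the count coefficient-by-coefficient, replacing the degree formula by the elementary fact that a numerically trivial Chow class has vanishing codimension-$0$ component. This buys elementarity (no generalized degree formula, and from \cite{BP} only the definition of $\sN^*$), at the price of two auxiliary verifications that you assert but should spell out: (i) $\alpha\in\sN^*_\Q(A)$ if and only if every Chow coefficient of $\psi_A(\alpha)$ is numerically trivial in $CH_\Q(A)$ --- this does hold, by testing against classes $\delta\in CH_\Q(A)\subset CH_\Q[\bt](A)$ with trivial $\bt$-part and using that $\psi$ commutes with $\pi_{A*}$ and that the $\bt^I$ are linearly independent; and (ii) the Chow analogue of Lemma~\ref{lemma:num}, which is the same projection-formula computation. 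Your translation of the hypothesis into the shifted bound $\sF(x_J)\in CH^{\geqslant g-p+|J|}_\Q(\hat{A})$ is the correct one, and the final count $(p+1)+(g-p+|J|)\geq g+1$ matches the paper's. Both proofs are valid.
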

\begin{proof}
Pick $\alpha_1,\ldots,\alpha_{p+1}\in \sN^*_\Q(A)$. Note that by Proposition~\ref{prop:properties}, part (2), 
\begin{equation}\label{eq:pontryagin}
\sF^\Omega(\alpha_1\star\alpha_2\star\cdots\star\alpha_{p+1})=\sF^\Omega(\alpha_1)\sF^\Omega(\alpha_2)\cdots\sF^\Omega(\alpha_{p+1}).
\end{equation}

Suppose that, for some $i$, $\sF^\Omega(\alpha_i)\in\Omega^{\leqslant 0}_{\Q}(\hat{A})$, then by the Generalized degree formula (\cite[Theorem~4.4.7]{LM}), we get
\[ \sF^\Omega(\alpha_i)=\deg(\sF^\Omega(\alpha_i))[\Id_{\hat{A}}]+\sum_{\codim_{\hat{A}}Z>0}\omega_Z[\tilde{Z}\to {\hat{A}}], \]
where the sum is over closed integral subschemes $Z\subset \hat{A}$, $\tilde{Z}$ is smooth with a birational morphism $\tilde{Z}\to Z$ and $\omega_Z\in \bL^{<0}$. Lemma~\ref{lemma:num} shows that $\sF^\Omega(\alpha_i)\in\sN^*_\Q(\hat{A})$. Then, by \cite[Proposition~3.4]{BP}, $\deg(\sF^\Omega(\alpha_i))=0$. Hence, $\sF^\Omega(\alpha_i)\in \bL^{<0}\cdot\Omega^{\geqslant 1}(\hat{A})$.

By \eqref{eq:pontryagin}, it follows that $\sF^\Omega(\alpha_1\star\cdots\star\alpha_{p+1})$ is in $\Omega^{\geqslant p+1}(\hat{A})$ or in $\bL^{<0}\cdot\Omega^{\geqslant p+1}(\hat{A})$. Thus, by Proposition~\ref{prop:properties}, part (2),
\[ \sF^\Omega(\alpha_1\star\cdots\star\alpha_{p+1}\star x)=\sF^\Omega(\alpha_1\star\cdots\star\alpha_{p+1})\sF^\Omega(x)=0. \]
Applying $\hat{\sF}^\Omega$ and using Proposition~\ref{prop:properties}, part (1), we get
\[ (-1)^g(\sigma_A)^*(\alpha_1\star\cdots\star\alpha_{p+1}\star x)=0. \]
Hence, $\alpha_1\star\cdots\star\alpha_{p+1}\star x=0$, which completes the proof.
\end{proof}

The same arguments as in the proof of the above proposition with $p=g$ shows  
\begin{corollary}
$\sN^{*\star (g+1)}_\Q=(0)$.
\end{corollary}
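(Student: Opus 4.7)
My plan is to deduce the corollary directly from Proposition~\ref{prop:FpOmega} by specialising to $p=g$ with a well-chosen $x$. Take $x=[e:\Spec k\to A]\in\Omega^g_\Q(A)$, the class of the origin of $A$. The hypothesis of the proposition reads $\sF^\Omega(x)\in\Omega^{\geq 0}_\Q(\hat A)$, which is vacuous, so the proposition yields
\[ \sN^{*\star(g+1)}_\Q\star [e]=0. \]
The corollary then follows at once from the fact that $[e]$ is the two-sided identity for the Pontryagin product.

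To verify this last point, let $\alpha\in\Omega^*_\Q(A)$ and consider the closed embedding $i:A\hookrightarrow A\times A$ sending $a\mapsto(a,e)$. The square
\[
\begin{CD}
A @>i>> A\times A\\
@VVV @VV{p_2}V\\
\Spec k @>>e> A
\end{CD}
\]
is transverse, so by base change $p_2^*[e]=p_2^*e_*(1)=i_*(1_A)$. The projection formula combined with $p_1\circ i=\id_A$ then yields
\[ p_1^*\alpha\cdot p_2^*[e]=i_*\bigl((p_1\circ i)^*\alpha\bigr)=i_*\alpha, \]
and since $\mu\circ i=\id_A$, pushing forward along $\mu$ gives $\alpha\star[e]=\mu_*(i_*\alpha)=\alpha$. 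Hence $\sN^{*\star(g+1)}_\Q=\sN^{*\star(g+1)}_\Q\star[e]=0$.

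I do not anticipate any serious obstacle: the argument reduces to one invocation of the previous proposition together with a standard projection-formula computation. If one preferred to avoid introducing $[e]$ altogether, an alternative is to rerun the proof of Proposition~\ref{prop:FpOmega} verbatim without an auxiliary factor; the only additional input required is the vanishing $\Omega^n_\Q(\hat A)=0$ for $n>g$, which follows from the generalized degree formula because any homogeneous generator $\omega_W[\tilde W\to\hat A]$ of such a class would force $\codim W=n-\deg\omega_W\geq n>g=\dim\hat A$, which is impossible.
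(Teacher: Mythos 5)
Your argument is correct in outline and takes a genuinely different route from the paper's. The paper proves the corollary by rerunning the proof of Proposition~\ref{prop:FpOmega} with $p=g$ and no auxiliary factor $x$: the product $\sF^\Omega(\alpha_1)\cdots\sF^\Omega(\alpha_{g+1})$ lands in $\Omega^{\geqslant g+1}_\Q(\hat{A})$ (possibly with coefficients in $\bL^{<0}$), which vanishes because $\Omega^*(\hat A)$ is generated as an $\bL^*$-module by classes of codimension at most $g$ and $\bL^*$ sits in non-positive degrees --- exactly the ``alternative'' you sketch in your last sentence. Your main route instead keeps the proposition as a black box and feeds it $x=[e]$, then uses that $[e]$ is the identity for the Pontryagin product; that computation (transverse base change for $p_2^*e_*(1)=i_*(1)$, projection formula, $\mu\circ i=\id_A$) is fine and is a cleaner formal reduction.

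One step does need repair: the claim that the hypothesis $\sF^\Omega(x)\in\Omega^{\geqslant 0}_\Q(\hat A)$ ``is vacuous'' is false as stated. Unlike the Chow group, $\Omega^*_\Q(\hat A)$ has nonzero components in negative degrees (for instance $\bL^{<0}_\Q\cdot 1_{\hat A}$, which under $\psi_{\hat A}$ contains $\Q\, t_1\subset CH_\Q[\mathbf{t}]^{-1}(\hat A)$), so $\Omega^{\geqslant 0}_\Q(\hat A)\subsetneq\Omega^*_\Q(\hat A)$ and the condition has content. It is, however, satisfied for your chosen $x$: by the same transverse square and the normalization $i^*\sP\cong\sO$ one computes
\[
\sF^\Omega([e])=p_{\hat A*}\bigl(p_A^*e_*(1)\cdot G(c_1^\Omega(\sP))\bigr)=p_{\hat A*}\,i_*\bigl(G(c_1^\Omega(i^*\sP))\bigr)=p_{\hat A*}\,i_*(1)=1_{\hat A}\in\Omega^0_\Q(\hat A),
\]
where here $i:\{e\}\times\hat A\hookrightarrow A\times\hat A$. (Alternatively, Theorem~\ref{theorem:Beauville} shows that for any $x\in\Omega^g_\Q(A)$ one has $\sF^\Omega(x)\in\bigoplus_{q\geq 0}\Omega^q_\Q(\hat A)$, so the hypothesis holds for every degree-$g$ class, though not for the vacuous reason you give.) With that one line added, your proof is complete.
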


One can check that $\sN^*_\Q(A)$ forms an ideal of $\Omega^*_\Q(A)$ under Pontryagin product. By \cite[Lemma~4.5.10]{LM} and \cite[Theorem~3.2]{BP}, $\sN^g_{\Q}(A)\iso I$, which is the subgroup of $0$-cycles of degree $0$. This implies 
$\sN^g_{\Q}(A)/\sN^g_{\Q}(A)^{\star 2}\iso I/I^{\star 2}\iso A$. It would be interesting to study the structure of the group 
$\sN_{\Q}(A)/\sN_{\Q}(A)^{\star 2}$.

\section{Motivic decomposition}
\label{sec:motive}

Our goal in this section is to get a canonical decomposition of cobordism motives  of abelian varieties as shown by Deninger and Murre in \cite{DM} for Chow motives. That is, a decomposition
\[ h_\Omega(A) = \bigoplus_i h^i_\Omega(A) \quad \text{where } h^i_\Omega(A)=(A,\pi_i,0),\]
$\pi_i$ being orthogonal  projectors such that $\bn^*\pi_i=n^i\pi_i$. In \cite[\S~5]{Scholl}, Scholl gave an alternative proof of the decomposition for Chow motives and also described the projectors in the decomposition more explicitly. 

Let $A$ be an abelian variety of dimension $g$ over $k$. Let $\Delta$ denote the class of the diagonal morphism $[A\to A\times A]$ in $\Omega^g_\Q(A\times A)$. We are going to show
\begin{theorem}\label{theorem:decomp}
There is a canonical decomposition
\[\Delta=\sum_{i=0}^{2g}\pi_i\text{ in }\Omega^g_\Q(A\times A)\]
such that $(\id_A\times \bn)^*\pi_i=n^i\pi_i$ for all $n\in\Z$ and $\pi_i$'s are mutually orthogonal projectors, that is, $\pi_i^2=\pi_i$ and $\pi_i\circ\pi_j=0$ for $i\neq j$. Also, $c_\Omega(\bn)\circ\pi_i=n^i\pi_i=\pi_i\circ c_\Omega(\bn)$.
\end{theorem}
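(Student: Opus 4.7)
The strategy is to transport the Deninger--Murre decomposition of the diagonal from Chow motives to cobordism motives via the ring isomorphism $\psi_{A\times A}$ of Theorem~\ref{theorem:iso}. The key enabling observation is that all varieties appearing in the argument ($A$, $A\times A$, $A\times A\times A$) are themselves abelian varieties, and all the morphisms between them that enter the constructions (projections, diagonal, $\id_A\times\bn$, multiplication $\bn$) are homomorphisms of abelian varieties.

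The first substantive step is to establish that $\psi$ is compatible with composition of correspondences between powers of $A$. For $\alpha,\beta\in\Omega^*_\Q(A\times A)$ one has $\beta\circ\alpha=p_{13*}(p_{12}^*\alpha\cdot p_{23}^*\beta)$ with each $p_{ij}\colon A^3\to A^2$ a homomorphism of abelian varieties. Since $\nu$ is a morphism of oriented cohomology theories it commutes with pushforwards and with untwisted pullbacks; by \eqref{eq:pullback} the twisted and untwisted pullbacks agree on morphisms between abelian varieties, and by Lemma~\ref{lem:product} the twisted and untwisted product structures agree on any abelian variety. Consequently $\psi$ commutes with each $p_{ij}^*$, with $p_{13*}$, and with the product, and one obtains
\[\psi_{A\times A}(\beta\circ\alpha)=\psi_{A\times A}(\beta)\circ\psi_{A\times A}(\alpha),\]
where on the right the composition is taken in the category of $CH_\Q[\mathbf{t}]$-correspondences, itself the $\Z[\mathbf{t}]$-linear extension of the $CH_\Q$-correspondence category.

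With this compatibility in hand, invoke \cite{DM}: in $CH^g_\Q(A\times A)$ there exist unique mutually orthogonal idempotents $\pi_i^{CH}$, $i=0,\dots,2g$, with $\sum_i\pi_i^{CH}=\Delta^{CH}$, $(\id_A\times\bn)^*\pi_i^{CH}=n^i\pi_i^{CH}$, and $c_{CH}(\bn)\circ\pi_i^{CH}=n^i\pi_i^{CH}=\pi_i^{CH}\circ c_{CH}(\bn)$. Viewing each $\pi_i^{CH}$ as an element of $CH_\Q[\mathbf{t}]^g(A\times A)$ via the scalar inclusion $CH_\Q\hookrightarrow CH_\Q[\mathbf{t}]$, set $\pi_i:=\psi_{A\times A}^{-1}(\pi_i^{CH})\in\Omega^g_\Q(A\times A)$. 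The required properties then all transport across $\psi$: the equality $\sum_i\pi_i=\Delta$ uses that $\psi$ commutes with $(\delta_A)_*$; idempotence and mutual orthogonality use the composition-compatibility established above; the eigenvalue relation $(\id_A\times\bn)^*\pi_i=n^i\pi_i$ uses commutativity of $\psi$ with pullback along $\id_A\times\bn$; and both $c_\Omega(\bn)\circ\pi_i=n^i\pi_i$ and $\pi_i\circ c_\Omega(\bn)=n^i\pi_i$ follow by transporting their Chow analogues through $\psi$ via the same composition-compatibility (alternatively, using \eqref{eq:graphcomp} to rewrite the two compositions as $(\id_A\times\bn)^*\pi_i$ and $(\bn\times\id_A)_*\pi_i$, and transporting the respective Chow identities).

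The only substantive obstacle is the composition-compatibility of $\psi$ on correspondences between powers of $A$; once this is secured, the remainder of the argument is purely formal transport of the Deninger--Murre result across $\psi$.
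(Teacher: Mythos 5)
Your proposal is correct and follows essentially the same route as the paper: transport the canonical Chow decomposition of the diagonal (extended by scalars to $CH_\Q[\mathbf{t}]$) through the isomorphism $\psi_{A\times A}$, using that $\psi$ respects composition of correspondences because all relevant pullbacks, pushforwards and products involve only homomorphisms of abelian varieties. The only differences are cosmetic: you spell out the composition-compatibility of $\psi$ (which the paper merely asserts) and cite Deninger--Murre where the paper cites Scholl's version of the same decomposition, while the paper additionally records an explicit uniqueness argument at the cobordism level that your appeal to canonicity of the Chow projectors covers implicitly.
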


\begin{proof}
Note that such a decomposition is unique if it exists. Indeed, if $\{\rho_i\}_{i=0}^{2g}$ is another such decomposition, then
$ \pi_i=\sum_{j=0}^{2g}\pi_i\circ\rho_j$. Composing with $c_\Omega(\bn)$ from the left, we get $n^i\pi_i=\sum_{j=0}^{2g}n^j\pi_i\circ\rho_j$, which by substituting the expression for $\pi_i$ gives
\[ \sum_{j=0}^{2g}(n^j-n^i)\pi_i\circ\rho_j=0.\]
Since, this is true for all $n$, we must have $\pi_i\circ\rho_j=0$ for $i\neq j$, implying $\pi_i=\pi_i\circ\rho_i$. We can similarly show that $\rho_i=\pi_i\circ\rho_i$ implying $\pi_i=\rho_i$.

To see the existence, we first note that we have such a decomposition of the diagonal in $CH_\Q^g[\bt](A\times A)$ induced by extension of scalars from the canonical decomposition of Chow motives of $A$, as shown in \cite[\S~5]{Scholl}. That is, in $CH_\Q^g[\bt](A\times A)$, the diagonal $[\Delta]_{CH}$ may be expressed as
$ [\Delta]_{CH}=\sum_{i=0}^{2g}p^{\can}_i$ where for each $i$, $p_i^\can$ is a projector and $p_i^\can\circ p_j^\can=0\in \Hom_{Cor_{CH_\Q[\bt]^*}^0}(A,A)$ for $i\neq j$. Also, $c_{CH}(\bn)\circ p_i^\can=n^ip_i^\can=p_i^\can\circ c_{CH}(\bn)$. 

Now, take $\pi_i=\psi_{A\times\hat{A}}^{-1}(p_i^\can)$. Since, for $\alpha,\beta\in\oq(A\times A)$, we have $\psi_{A\times\hat{A}}(\alpha\circ\beta)=\psi_{A\times\hat{A}}(\alpha)\circ\psi_{A\times\hat{A}}(\beta)$, we readily get that $\pi_i$ is a projector and $\pi_i\circ\pi_j=0$ for $i\neq j$. Also, since, $\psi(c_\Omega(\bn))=c_{CH}(\bn)$, we get $c_\Omega(\bn)\circ\pi_i=n^i\pi_i=\pi_i\circ c_\Omega(\bn)$ for all $n\in\Z$.
\end{proof}

Let $\sM_\Omega$ be the category of cobordism motives over $k$, defined in \ref{subsec:motdef}. Let $\varphi$ denote the canonical morphism $\varphi=\nu_{CH}:\Omega^*\to CH^*$. $\varphi$ induces a functor  $\tilde{\varphi}:\sM_\Omega	\to\sM_{CH}$ of the corresponding categories of motives, acting as $\varphi$ on the morphisms and on objects as $(X,p,m)\mapsto (X,\varphi(p),m)$.
\begin{corollary}
There is a canonical decomposition of the cobordism motive of an abelian variety $A$ of dimension $g$ over $k$:
\[ h_\Omega(A)=\bigoplus_{i=0}^{2g}h_\Omega^i(A)\,, \]
where $h_\Omega(A)=(A,\id_A, 0)$ is the motive of $A$, $h_\Omega^i(A)=(A,\pi_i,0)$ and the $\pi_i$'s are such that $c_\Omega(\bn)\circ\pi_i=n^i\pi_i=\pi_i\circ c_\Omega(\bn)$. Furthermore, we have $\tilde{\varphi}(h_\Omega^i(A))=h^i_{CH}(A)^\can$, as defined in \cite[\S~5]{Scholl}.
\end{corollary}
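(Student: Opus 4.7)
The plan is to read this corollary as essentially a repackaging of Theorem~\ref{theorem:decomp}, together with one genuinely new compatibility statement (the one involving $\tilde\varphi$). First, I would observe that the identity $\Delta = \sum_{i=0}^{2g}\pi_i$ in $\Omega^g_\Q(A\times A)$, combined with the orthogonality relations $\pi_i\circ\pi_j=\delta_{ij}\pi_i$ supplied by Theorem~\ref{theorem:decomp}, is precisely a decomposition of $\id_{h_\Omega(A)}=\Delta$ into pairwise orthogonal idempotents in $\mathrm{End}_{\sM_\Omega}(h_\Omega(A))$. By the standard formalism of pseudo-abelian categories used to define $\sM_\Omega$ in \S~\ref{subsec:motdef}, this immediately induces a direct-sum splitting
\[ h_\Omega(A)=\bigoplus_{i=0}^{2g}(A,\pi_i,0)=\bigoplus_{i=0}^{2g}h_\Omega^i(A), \]
and the property $c_\Omega(\bn)\circ\pi_i=n^i\pi_i=\pi_i\circ c_\Omega(\bn)$ is inherited verbatim from the theorem.

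Next, I would turn to the identification $\tilde\varphi(h_\Omega^i(A))=h^i_{CH}(A)^{\can}$, which by the definition of $\tilde\varphi$ amounts to showing $\varphi(\pi_i)=q_i$, where $q_i\in CH^g_\Q(A\times A)$ is Scholl's canonical projector. My approach is to factor $\varphi$ through $\nu=\nu_{CH[\bt]^{(\bt)}}\otimes\Q$ using the augmentation $\rho:CH_\Q[\bt]^*_{(\bt)}\to CH^*_\Q$ that sends $t_i\mapsto 0$ for $i\geq 1$. A direct check shows $\rho$ is a morphism of oriented cohomology theories: setting $t_i=0$ collapses the twisted first Chern class $c_1^{(\bt)}(L)=\sum_{i\geq 0} c_1(L)^{i+1}t_i$ (with $t_0=1$) to the usual $c_1(L)$, trivializes all inverse Todd classes, and hence recovers the untwisted pullbacks and products. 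By the uniqueness part of universality of $\Omega^*$ (over $\Q$), one obtains $\varphi=\rho\circ\nu$. Since the $p_i^{\can}\in CH_\Q[\bt]^g(A\times A)$ used in Theorem~\ref{theorem:decomp} are by construction $q_i\otimes 1$, and since $\psi'_{A\times A}:CH_\Q[\bt]^*_{(\bt)}(A\times A)\to CH_\Q[\bt]^*(A\times A)$ is the identity on underlying groups by Lemma~\ref{lem:product}, the preimage under $\nu$ satisfies $\nu(\pi_i)=p_i^{\can}=q_i\otimes 1$ as group elements; applying $\rho$ then gives $\varphi(\pi_i)=q_i$, as required.

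I expect the main obstacle to be the bookkeeping around three closely related theories: $CH_\Q^*$, $CH_\Q[\bt]^*$, and $CH_\Q[\bt]^*_{(\bt)}$. One has to keep straight that $\psi'$ is the identity on elements but changes the ring/pullback structure in general, that it coincides with the identity also as a morphism of theories on abelian varieties (this is exactly the content of Lemma~\ref{lem:product} together with \eqref{eq:pullback}), and that $\rho$ is the additional collapse needed to reach $CH^*_\Q$ from the twisted theory. Once $\varphi=\rho\circ\nu$ is in place, the verification that $\varphi$ respects composition of correspondences (needed so that $\tilde\varphi$ is a well-defined functor carrying projectors to projectors) is automatic, since $\rho$ and $\nu$ both commute with pushforward, pullback, and products; the compatibility of projectors and their orthogonality then transfers cleanly from the cobordism side to the Chow side.
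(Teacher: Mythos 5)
Your proposal is correct and follows essentially the same route as the paper: the splitting of $h_\Omega(A)$ is the standard pseudo-abelian consequence of Theorem~\ref{theorem:decomp}, and the identification $\tilde{\varphi}(h_\Omega^i(A))=h^i_{CH}(A)^\can$ comes from the construction $\pi_i=\psi^{-1}(p_i^\can)$. Your factorization $\varphi=\rho\circ\nu$ through the augmentation $t_i\mapsto 0$ is a legitimate (and welcome) justification of the step the paper simply asserts as ``it follows from the construction that $\varphi(\pi_i)=p_i^\can$.''
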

\begin{proof}
This is immediate since, if $\id_A=\disp\sum_{i=0}^{n}\pi_i$ for mutually orthogonal projectors $\pi_i$, then
\[ (A,\id_A,0)=\bigoplus_{i=0}^n(A,\pi_i,0). \]
Also, it follows from the construction in the proof of Theorem~\ref{theorem:decomp} that $\varphi(\pi_i)=p_i^\can$ for all $i$, which implies $\tilde{\varphi}(h_\Omega^i(A))=h^i_{CH}(A)^\can$.
\end{proof}
\begin{remark}
Note that, by definition, the projectors $\pi_0$ and $\pi_{2g}$ are the same as those defined in \eqref{eq:p02d}, that is, $\pi_0=p^A_0$ and $\pi_{2g}=p^A_{2g}$. Also, $\pi_i^t=\pi_{2g-i}$ since this property holds for the $p_i^{\can}$s.
\end{remark}

\begin{ack}

This work was supported by the National Research Foundation of Korea (NRF) grant funded by the Korean government (MSIP) (No. 2013-042157). 

\end{ack}

\bibliography{biblo}{}
\bibliographystyle{amsplain}

\end{document}